\newtheorem{THM}{Theorem}
\newtheorem{LEM}[THM]{Lemma}
\newtheorem{PROP}[THM]{Proposition}
\theoremstyle{definition}
\newcommand\cwd{\operatorname{\bf cw}}
\newcommand\rwd{\operatorname{\bf rw}}
\renewcommand\deg{\operatorname{\bf deg}}
\newcommand\twd{\operatorname{\bf tw}}
\newcommand\cutrk{\rho}
\newcommand\rank{\operatorname{\bf rank}}
\newcommand\lab{\operatorname{lab}}
\newcommand\B{\mbox{\boldmath $B$}}
\newcommand\abs[1]{\lvert#1\rvert}
\newcommand\genus{\varepsilon}
\begin{document}
\title{Rank-width and Tree-width  of $H$-minor-free Graphs}
\author{
 Fedor V. Fomin\thanks{{\tt fomin@ii.uib.no}} \footnote{
Supported by the Norwegian Research 
Council.
 }\\
  Department of Informatics\\
  University of Bergen,\\
  N-5020 Bergen, Norway.
 \and
 Sang-il Oum\thanks{{\tt sangil@kaist.edu}} \footnote{
   Supported by  Basic Science Research Program
   through the National Research Foundation of Korea (NRF)
   funded by the Ministry of Education, Science and Technology (2009-0063183).
 }\\
 Department of Mathematical Sciences \\
 KAIST\\
 Daejeon, 305-701,
 Republic of Korea.
 \and
 Dimitrios M. Thilikos\thanks{{\tt sedthilk@math.uoa.gr}} \footnote{Supported by the project  ``Kapodistrias'' (A$\Pi$ 02839/28.07.2008) of the National and Kapodistrian University of Athens (project code: 70/4/8757).   
 }\\
  Department of Mathematics\\
  National and Kapodistrian University of Athens\\
  Panepistimioupolis, GR-157 84, Athens, Greece.
}

\date{September 18, 2009}
\maketitle

\begin{abstract}
  \noindent
  We prove that for any fixed $r\geq 2$, the tree-width of graphs not
  containing $K_{r}$ as a topological minor (resp. as a subgraph) is
  bounded by a linear (resp. polynomial)
  function of their  rank-width. We also present refinements of our bounds for
 other graph classes such as $K_{r}$-minor free graphs and graphs of bounded genus.
\end{abstract}

\section{Introduction}\label{sec:intro}

Tree-width and rank-width are width parameters of
graphs, which are, roughly speaking,
measures of their decomposability. These parameters play very important roles in Structural and Algorithmic Graph Theory. 
For example, if we restrict the input to graphs of bounded  tree-width
or rank-width,
then many problems that are NP-hard in general can be solved in
polynomial time.

It is natural to ask about the relations between various width
parameters of graphs. Let us write $\twd(G)$ and $\rwd(G)$
for tree-width and rank-width of a graph $G$, respectively.
As it was shown by Oum~\cite{Oum2006c},  for any graph $G$ 
%
\begin{equation}
 \rwd(G)\le \twd(G)+1.
 \label{eq:rwdtwd}
\end{equation}
On the other hand,  there is no function~$f$ such that $\twd(G)\le f(\rwd(G))$.
For instance, the complete graph $K_n$ on $n$ vertices has  tree-width $n-1$
and  rank-width $1$. 
However, the situation changes when   we impose some conditions on the structure of graph $G$.
 Courcelle and Olariu~\cite{CO2000} proved that such functions $f$
exist under various conditions. Actually, their paper
is about the clique-width of
graphs, which has been defined earlier than the rank-width. In
fact,  the rank-width was defined by Oum and Seymour~\cite{OS2004}
so that graphs have bounded rank-width if and only if they have
bounded clique-width. More precisely, they
proved that
\begin{equation}
 \label{eq:rwdcw}
 \rwd(G)\le \cwd(G)\le 2^{\rwd(G)+1}-1,
\end{equation}
where $\cwd(G)$ denotes the clique-width of a graph $G$.

In particular, Courcelle and Olariu {\cite[Theorem 5.9]{CO2000}} have shown that 
   for every positive integer $r$, there exists a function~$f_r$ such that 
if a graph $G$ has no subgraph isomorphic to the complete bipartite graph $K_{r,r}$ on $2r$ vertices, then $\twd(G) \le f_{r}(\cwd(G))$.
The proof by Courcelle and Olariu is highly non-constructive.
Later, Gurski and Wanke \cite{GW2000} proved that
if a graph $G$ has no subgraph isomorphic to $K_{r,r}$, then
\begin{equation}\label{eq:gw}
\twd(G)+1 \le 3(r-1) \cwd(G).
\end{equation}
By combining (\ref{eq:gw})  with (\ref{eq:rwdcw}),
we can directly deduce that for
every graph   $G$ having no $K_{r,r}$ as a subgraph,
\begin{equation}
 \label{eq:rwtw}
\twd(G)+1\le 3(r-1)(2^{\rwd(G)+1}-1).
\end{equation}
In this paper, we  show that the  exponential bound \eqref{eq:rwtw}
can be
improved to a \emph{polynomial} bound  for graphs not
containing $K_{r,r}$
as a subgraph and to a \emph{linear} bound for graphs not containing
$K_{r}$ as a  minor or a topological minor. We will apply  our proof techniques
to various classes of graphs while still obtaining linear bounds.

\medskip 

Let us summarize our theorems as follows. The results are ordered with respect to the generality of the corresponding class. 
In what follows 
 $G$ is  a graph with at least one edge.  We refer to  Section~\ref{sec:def} for the definitions of graph classes. 

\begin{itemize}
\item Theorem~\ref{thm:surface}: If  $G$ is  planar, then
  \begin{align*}
  \twd(G)&< 72\rwd(G) -1.
  \end{align*}
\item Theorem~\ref{thm:surface}: If the Euler genus of $G$ is at most $g$, then
  \begin{align*}
     \twd(G)&< 3(2+\sqrt{2g})(6\rwd(G)+5g)-1.
 \end{align*}
\item Theorem~\ref{thm:minor}: 
  If  $G$ contains  no $K_r$ as a minor,  $r>2$, then
  \begin{align*}
    \twd(G)&= 2^{O( r\log\log r)}\rwd(G)
 \end{align*}
  
\item Theorem~\ref{thm:topminor}: 
  If  $G$ contains  no $K_{r}$ as a topological minor
  for $r>2$, then
  \begin{align*}
    \twd(G)=   2^{O( r\log r)} \rwd(G).
  \end{align*}
  
  \item Theorem~\ref{thm:topgrad}: 
  If  $\nabla_{1}(G)\leq r$, 
  then
  \begin{align*}
       \twd(G) &< 12\cdot  r \cdot  4^{r} \rwd(G) -1.
 \end{align*} Here,  $\nabla_{1}$ is the greatest reduced average degree  with rank $1$. 
 \item Theorem~\ref{thm:krr}: If $G$ has no  subgraph isomorphic to $K_{r,r}$ for $r\ge 2$, then
  \begin{align*}
     \twd(G)&<3(r-1)\left(\frac{2(r-2)}{r+1} \binom{\rwd(G)}{r}+
    2\sum_{i=0}^r \binom{\rwd(G)}{i}\right) -1.
 \end{align*}
\end{itemize}

\section{Definitions}\label{sec:def}
In this paper all graphs  are simple undirected graphs without
loops and parallel edges.
For a vertex $v\in V(G)$ of graph $G$,
we denote by $N_{G}(v)$ the set of vertices  in $G$ that are adjacent
to $v$ and
we write $\deg_{G}(v)=|N_{G}(v)|$ to denote the \emph{degree} of a vertex $v$ in $G$.
The \emph{union} $G\cup H$
of two graphs $G$ and $H$
is  the graph such that
$V(G\cup H)=V(G)\cup V(H)$, and $E(G\cup H)= E(G)\cup E(H)$.
Two distinct vertices $x,y$ of $G$ are
\emph{twins} if there are no vertices in $V(G)\setminus \{x,y\}$
that are adjacent to exactly one of $x$ and $y$, or equivalently 
 $N_{G}(x)\setminus\{x,y\}=N_{G}(y)\setminus\{x,y\}$.
A \emph{clique} of a graph is  a set of  pairwise
adjacent vertices. Note that the empty set is a clique.

\paragraph{Subgraphs, minors, topological minors and star minors}
Let $G$ be a graph on the vertex set $V(G)$ and with the edge set $E(G)$.
For $v\in V(G)$ and $e\in E(G)$, we denote by $G-v$ the graph
obtained from $G$ by removal of $v$ and all edges incident with $v$
and by $G\setminus e$ the graph obtained by
removal of $e$ from $G$.
For $\deg_{G}(v)=2$, we call by the \emph{dissolution of $v$ in $G$}
the graph obtained from $G$ by adding an edge connecting the neighbors
$N_{G}(v)$  of $v$ (if there is no such an edge in $G$) and then
by removing $v$.
The result of the \emph{contraction} of $e=\{x,y\}$ from $G$
is the graph $G/ e$ obtained from $G-x-y$ by
adding a new vertex $v_{x,y}$ and making it adjacent
to all  vertices of $(N_{G}(x)\cup N_{G}(y))\setminus \{x,y\}$.

For graphs $G$ and $H$,  we say that $H$ is an
\emph{induced subgraph of} $G$, and denote it by $H\subseteq_{is}G$,
if $H$ can be obtained from $G$ after  a sequence
of vertex removals. Also, for $S\subseteq V(G)$, we call $H$ the subgraph
of $G$ \emph{induced by $S$}, and write $H=G[S]$, if the vertex set
required to be removed from
$G$ in order to transform $G$ to $H$ is $V(G)\setminus S$.

 We say that $H$ is a \emph{subgraph} of $G$,
 if $H$ can be obtained from $G$  after applying a
sequence of vertex and edge removals. We say  that $H$ is a
\emph{topological minor} of $G$, 
if $H$ can be obtained from $G$  by applying a sequence
of vertex/edge removals and dissolutions. Finally, we say that
$H$ is a \emph{minor} of $G$ 
if $H$ can be obtained from $G$  after applying a sequence of vertex
removals or edge removals/contractions.

The \emph{greatest reduced average degree   with rank $p$} of a graph $G$
is
\[
\nabla_{p}(G)=
\max\frac{|E(H)|}{|V(H)|} 
,\]
where maximum is taken over all the minors $H$ of $G$ obtained by contracting a set of vertex-disjoint subgraphs with
radius at most $p$ and then deleting any number of edges \cite{NO2008a,NO2008b, NO2008c}.  In this work, we consider only graphs with $p=1$. We say that 
 a graph $H$ is a {\em star minor} of $G$  if $H$ is obtained from $G$ by contracting edges of vertex-disjoint subgraphs of radius $1$ (or equivalently, vertex-disjoint stars).  
 Thus   $\nabla_{1}(G)$ is the maximum density 
among all star-minors of $G$. 
 We also say that a graph $G$ 
is {\em $d$-degenerate} if each of its subgraphs (including $G$ itself) has a vertex of degree  at most $d$.
It is easy to observe that every graph $G$ is $2\cdot \nabla_{p}(G)$-degenerate for every $p\geq 0$.
%

%

%

\paragraph{Hypergraphs.}

A \emph{hypergraph} $H$ is a pair $(V(H),E(H))$ of a finite set $V(H)$,
called the
vertex set,
and a set $E(H)$ of subsets of $V(H)$, called the hyperedge set.
The \emph{incidence graph}  of a hypergraph $H$
is the bipartite graph $I(H)$ on the vertex set $V(H)\cup E(H)$
such that $v\in V(H)$ is adjacent to $e\in E(H)$
in $I(H)$ if and only if $v$ is incident with $e$ in $H$ (in other words, $v\in
e$).


\paragraph{Bipartite graphs.}
For a graph $G$ and a subset $X\subseteq V(G)$, we use
notation $\overline{X}$ for
$V(G)\setminus X$.
For a bipartite graph $G$ with bipartition $X$ and $\overline{X}$,
its
\emph{bipartite adjacency matrix} is
a $|X|\times |\overline{X}|$ matrix
\[
\B_{G}=(b_{i,j})_{i\in X,j\in \overline{X}},\]
over the binary
field $\mathrm{GF}(2)$
such that $b_{i,j}=1$ if and only if $\{i,j\}\in E(G)$.

For a nonempty  subset $X$ of the vertex set of $G$,
we define the subgraph
$G\langle X\rangle$
with vertex set $V(G)$ and edge set
\[
\{\{x,x'\}\in E (G) \mid x\in X, x'\in \overline{X}\}.\]
Hence $G\langle X\rangle $
is the bipartite subgraph of $G$ that contains only the edges
with endpoints in  $X$ and $\overline{X}$.



\paragraph{Rank-width.}
For a graph $G$ and $X\subseteq V(G)$, the \emph{cut-rank} function is
defined to be
$$\cutrk_G(X)=\rank (\B_{G\langle X\rangle}).$$
If $X=\emptyset$ or $X=V(G)$, then $\cutrk_G(X)=0$.
Let us note that $\B_{G\langle X\rangle}$ is a matrix over the binary field
when we consider 
rank function of this matrix. 

A tree is \emph{ternary} if all its vertices are
of  degree 1 or 3.
We denote by $L(T)$ the set of leaves of a tree $T$.
A \emph{rank-decomposition} of a graph $G$ is a pair $(T,\mu)$
consisting of a ternary tree $T$
and a bijection $\mu:V(G)\rightarrow L(T)$.
Each edge $e$ of $T$ defines a partition $(X_e,Y_e)$ of $L(T)$.
The \emph{width} of an edge $e$ of $T$
is $\cutrk_G(\mu^{-1}(X_e))$. The \emph{width} of a rank-decomposition
$(T,\mu)$ is the maximum width of all edges of $T$.
The \emph{rank-width}  of a graph $G$, denoted by $\rwd(G)$,
is the minimum width of all
rank-decompositions of $G$.
If $\abs{V(G)}\le 1$, then $G$ admits no rank-decompositions from the above
definition. If that is the case,  we define
the rank-width of $G$ to be $0$.

\paragraph{Tree-width.}
A \emph{tree decomposition} of  a graph $G$
is a pair $(T,X)$, where $T$
is a tree, and
$X=(\{X_{v} \mid v\in V(T)\})$ is a collection of
subsets of $V(G)$ such that
\begin{enumerate}[(T1)]
\item For each edge $e$ of $G$,
 the endpoints of $e$ are contained in $X_v$ for some $v\in V(T)$.
\item If $a,b,c\in V(T)$ and the path from $a$ to $c$ in $T$ contains $b$,
 then $X_a\cap X_c\subseteq X_b$.
\item $\cup_{v\in V(T)} X_v= V(G)$.
\end{enumerate}
The \emph{width} of a tree decomposition $(T,(X_v)_{v\in V(T)})$
is $\max_{v\in V(T)} \abs{X_v}-1$.
The \emph{tree-width} of a graph is the minimum width of all
tree decompositions of the graph.

\paragraph{Clique-width.}
For a positive integer $k$, a \emph{$k$-graph} is a pair $(G,\lab)$
of a graph $G$ and a labeling function
\[
\lab:V(G)\rightarrow \{1,2,\ldots,k\}.
\]
If $\lab(v)=i$, then we call $i$ the \emph{label} of $v$.
From now on, we define \emph{$k$-expressions}, which are algebraic
expressions with the following four  operations
to describe how to construct $k$-graphs.
\begin{itemize}
\item
 For $i\in \{1,2,\ldots,k\}$, $\cdot_i$ is a $k$-graph consisting of a
 single vertex of label $i$.
\item
 For distinct $i,j\in \{1,2,\ldots,k\}$,
 $\rho_{i\to j}(G,\lab)=(G,\lab')$
 in which
 $\lab'(v)=\lab(v)$ if $\lab(v)\neq i$
 and $\lab'(v)=j$ if $\lab(v)=i$.
\item For distinct $i,j\in \{1,2,\ldots,k\}$,
 $\eta_{i,j}(G,\lab)=(G',\lab)$
 in which
 $V(G')=V(G)$
 and $E(G')=E(G)\cup\{vw: \lab(v)=i,\lab(w)=j\}$.
\item $\oplus$ is the disjoint union of two $k$-graphs.
 In other words, $(G_1,\lab_1)\oplus (G_2,\lab_2)=(G,\lab)$
 in which $G$ is the disjoint union of $G_1$ and $G_2$,
 and $\lab(v)=\lab_1(v)$ if $v\in V(G_1)$
 and $\lab(v)=\lab_2(v)$ if $v\in V(G_2)$.
\end{itemize}
The \emph{clique-width} of a graph is the minimum $k$
such that
there exists a $k$-expression
with value $(G,\lab)$ for some labeling function
$\lab$.

\section{Rank-width and clique-width}\label{sec:rwdcwd}
For a graph $G$ and a set $X\subseteq V(G)$, we define
\[\lambda_{G}(X)=\lvert\{ N_{G\langle X\rangle}(v)\mid v\in \overline{X}\}\rvert,\]
which is the number of distinct neighborhoods
of the vertices in $\overline{X}$ in the graph $G\langle X\rangle$.
By the
definition of $G\langle X\rangle$, each such a neighborhood is
a subset of $X$.
For integer $k>0$,
we also  define  $$\lambda_{G}(k)=\max\{\lambda_{G}(X)\mid X\subseteq V(G), |X|\le k\}.$$
Clearly, in general, $\lambda_{G}(k)\le 2^{k}$. As we will see in the following sections,
better bounds can be obtained when $G$ belongs to certain  graph classes.

\begin{LEM}\label{lem:rank}
  Let $G$ be a graph and let $X$ be a subset of $V(G)$ such that
  $\rho_{G}(X)\le k$. Then the bipartite
 adjacency matrix of $G\langle X\rangle$ has
 at most $\lambda_{G}(k)$ distinct  rows.
\end{LEM}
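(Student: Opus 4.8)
The plan is to reinterpret the quantity ``number of distinct rows'' combinatorially and then reduce it to $\lambda_G$ via a small column basis. First I would note that the rows of $\B_{G\langle X\rangle}$ are indexed by $X$, and the row associated with a vertex $x\in X$ is precisely the characteristic vector of its neighborhood $N_{G\langle X\rangle}(x)\subseteq \overline{X}$. Thus the number of distinct rows equals $\abs{\{N_{G\langle X\rangle}(x)\mid x\in X\}}$, and the goal becomes: show that these neighborhoods are already distinguished by their intersection with a suitable set of at most $k$ vertices of $\overline{X}$.

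The linear-algebraic core is the following. Since $\rank(\B_{G\langle X\rangle})=\cutrk_G(X)\le k$, the column space of $\B_{G\langle X\rangle}$ over $\mathrm{GF}(2)$ has dimension at most $k$, so I can choose a set $Y\subseteq\overline{X}$ of at most $k$ columns forming a basis of that column space. The key observation is that every column of $\B_{G\langle X\rangle}$ is a $\mathrm{GF}(2)$-linear combination of the columns indexed by $Y$, and the coefficients of this combination depend only on the column, not on the row. Consequently, if two rows (indexed by $x,x'\in X$) agree in every coordinate lying in $Y$, then applying the same coefficients coordinate-by-coordinate forces them to agree in every coordinate of $\overline{X}$. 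Hence the number of distinct rows is bounded by the number of distinct restrictions of rows to the coordinate set $Y$.

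It remains to translate this restriction count into $\lambda_G$. The restriction of the row of $x$ to $Y$ is the characteristic vector of $N_G(x)\cap Y$, which—because $Y\subseteq\overline{X}$—equals $N_{G\langle Y\rangle}(x)$. Since $X\subseteq\overline{Y}$, each such set appears in the collection $\{N_{G\langle Y\rangle}(v)\mid v\in\overline{Y}\}$, whose cardinality is exactly $\lambda_G(Y)$ by definition. As $\abs{Y}\le k$, we have $\lambda_G(Y)\le\lambda_G(k)$, and chaining the inequalities yields that the number of distinct rows is at most $\lambda_G(k)$. I expect the main obstacle to be this last identification step: one must verify that a column basis may be taken to be an honest vertex subset $Y$ of size at most $k$, and that restricting the rows to $Y$ literally computes the neighborhoods $N_{G\langle Y\rangle}(x)$ that $\lambda_G(Y)$ counts. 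The rest is a routine application of rank over $\mathrm{GF}(2)$.
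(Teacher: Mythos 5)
Your proof is correct and is essentially the paper's own argument: the paper's (very terse) proof consists precisely of your key step---replacing the matrix by its restriction to a set $Y\subseteq\overline{X}$ of $\cutrk_G(X)\le k$ columns spanning the column space, which preserves the number of distinct rows---and leaves implicit the final identification of the restricted rows with the neighborhoods $N_{G\langle Y\rangle}(x)$ for $x\in X\subseteq\overline{Y}$, so that their count is at most $\lambda_G(Y)\le\lambda_G(k)$. You have simply spelled out those omitted details correctly.
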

\begin{proof}
Let $M$ be the bipartite adjacency matrix of $G\langle X\rangle$.
We may assume that $M$ has exactly $\rho_G(X)$ columns, because there
exist $\rho_G(X)$ columns whose linear combination spans
all other column vectors.
\end{proof}

The following lemma is implicit in Oum and Seymour~\cite{OS2004}.
For a set $X$ of vertices of a graph $G$, let $c_G(X)$
be the number of  distinct \emph{nonzero} rows in the bipartite
adjacency matrix of $G\langle X\rangle$. For  a rank-decomposition
$(T,\mu)$ of $G$, 
we define $\beta_{G}(T,\mu)=\max\{ \max\{c_G(X_{e}),c_G(Y_e)\}\mid  e\in E(T)\}$.


\begin{LEM}\label{lem:cwdrwd}
 Let $(T,\mu)$ be a rank-decomposition of a graph $G$. 
 Then
 the clique-width of $G$ is at most $2 \beta_{G}(T,\mu)+1$.
\end{LEM}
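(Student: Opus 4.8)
The plan is to turn the rank-decomposition $(T,\mu)$ into a $k$-expression with $k=2\beta_{G}(T,\mu)+1$, built by processing $T$ from its leaves towards a chosen root (the cases $\abs{V(G)}\le 1$ being trivial). First I would root $T$: subdivide an arbitrary edge of $T$ by a new vertex $r$ and take $r$ as the root, so that $r$ and every internal node has exactly two children and every leaf corresponds under $\mu$ to a single vertex of $G$. For a node $v$ let $X_v\subseteq V(G)$ be the set of vertices mapped to the leaves below $v$; then $X_v$ and $\overline{X_v}$ form the partition of $V(G)$ associated with the parent edge of $v$, whence $c_G(X_v)\le\beta_{G}(T,\mu)$, and $X_r=V(G)$. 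The induction maintains, at each node $v$, a $k$-expression whose value is the induced subgraph $G[X_v]$ equipped with a \emph{canonical} labeling: two vertices $u,w\in X_v$ receive the same label if and only if $N_G(u)\cap\overline{X_v}=N_G(w)\cap\overline{X_v}$, and all vertices with empty external neighborhood are given one fixed \emph{null label}. Since the distinct nonempty external neighborhoods correspond exactly to the distinct nonzero rows of $\B_{G\langle X_v\rangle}$, this labeling uses at most $c_G(X_v)+1\le\beta_{G}(T,\mu)+1$ labels.

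Leaves are produced directly by the single-vertex operation $\cdot_i$. For an internal node $v$ with children $a,b$ we have $X_v=X_a\cup X_b$ (a disjoint union), and the only edges of $G[X_v]$ missing from the disjoint union $G[X_a]\oplus G[X_b]$ are the edges across the cut $(X_a,X_b)$. The key observation is that adjacency across this cut is constant on pairs of canonical classes: if $u,u'\in X_a$ share a label then $N_G(u)\cap X_b=N_G(u')\cap X_b$ (because $X_b\subseteq\overline{X_a}$), and symmetrically for $X_b$; hence whether $x\in X_a$ and $y\in X_b$ are adjacent depends only on the labels of $x$ and $y$. Therefore, after combining the two children with $\oplus$ using \emph{disjoint} label sets, I insert exactly those $\eta_{i,j}$ operations for which the class labeled $i$ in $X_a$ and the class labeled $j$ in $X_b$ are mutually adjacent; this adds precisely the cut edges and nothing else, yielding $G[X_v]$. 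Finally I apply a sequence of $\rho_{i\to j}$ operations to pass to the canonical labeling of $X_v$: each child class has a single external neighborhood relative to $X_v$, obtained from its external neighborhood relative to $X_a$ (or $X_b$) by deleting the part lying inside the sibling set, so classes that must be identified are merged by relabeling and no class is ever split.

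It remains to bound the number of labels ever in use. During the combination at $v$ the nonzero classes of $X_a$ and of $X_b$ must use disjoint labels, costing $c_G(X_a)+c_G(X_b)$ labels; the crucial saving is that a vertex with empty external neighborhood relative to its child set has no neighbor in the sibling set, hence takes part in no $\eta_{i,j}$, so the null classes of both children may safely share a single common null label. Thus the combination step uses at most $c_G(X_a)+c_G(X_b)+1\le 2\beta_{G}(T,\mu)+1$ labels, each canonical labeling uses at most $\beta_{G}(T,\mu)+1\le 2\beta_{G}(T,\mu)+1$ labels, and $X_r=V(G)$ needs no relabeling at the root. Hence the whole construction is a $(2\beta_{G}(T,\mu)+1)$-expression evaluating to $G$, which gives the claimed bound on the clique-width.

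The main obstacle is exactly this tight bookkeeping: verifying that adjacency across every cut is determined by the canonical labels, so that the inserted $\eta$-operations are clean, and arranging the label sets—in particular the shared null label—so that the simultaneous presence of both children costs $2\beta_{G}(T,\mu)+1$ rather than $2\beta_{G}(T,\mu)+2$ labels. By comparison, the relabeling needed to realize the prescribed disjoint label sets at each combination, and to cast each child's output into the precise canonical form assumed by the induction hypothesis, is routine once enough labels in the allowed range are available.
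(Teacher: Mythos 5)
Your proposal is correct and takes essentially the same approach as the paper's proof: a bottom-up induction on a rooted version of $T$, maintaining a labeling that refines external neighborhoods with a single shared ``null'' label $2C+1$ for vertices without outside neighbors, joining children with $\oplus$ on disjoint nonzero label sets, inserting $\eta_{i,j}$ between adjacent class pairs, and then merging labels by $\rho$ to restore the invariant. The only (cosmetic) difference is that you root $T$ by subdividing an edge, whereas the paper roots at an internal vertex; your variant in fact handles the root more cleanly, since it guarantees exactly two children there.
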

\begin{proof} We set $C=\beta_{G}(T,\mu)$.
 We may assume that $\abs{V(G)}\ge 3$.
We turn $T$ into a rooted directed tree by choosing an internal vertex $r$
as a root and by directing all edges from the root.
%

 For a vertex $v$ in $T$, let
 $D_v=\{x\in V(G): \text{$\mu(x)$ is a descendant of $v$ in $T$}\}$.
 Let $G_v$ be the subgraph of $G$ induced on $D_v$.

 We  claim that for each vertex $v$ of $T$,
 there is a $(2C+1)$-expression $t_v$ with value $(G_v,lab_v)$ for some
 map $lab_v:V(G_v)\rightarrow \{1,2,\ldots,C,2C+1\}$
 satisfying the following two conditions:
 \begin{itemize}
 \item   If
 $\lab_v(x)=\lab_v(y)$,
 then every vertex in $V(G)\setminus D_v$
 is  either adjacent to both $x$ and $y$,
 or nonadjacent to both $x$ and $y$.
 \item  If $x$ in $D_v$ has no neighbor in $V(G)\setminus D_v$,
   then $\lab_v(x)=2C+1$.
 \end{itemize}

 We proceed by induction on the number of descendants of $v$ of $T$. If $v$
 is a leaf, then we let $t_v=\cdot_{2C+1}$.
 Now let us assume that $v$ has two children $v_1$ and $v_2$.
 By the induction hypothesis,
 we have $(2C+1)$-expressions $t_{v_1}$ and $t_{v_2}$
 with values   $(G_{v_1},\lab_{v_1})$, $(G_{v_2},\lab_{v_2})$,
 respectively.
 We glue $t_{v_1}$ and $t_{v_2}$ to obtain a $(2C+1)$-expression $t_v$
 for $G_v$.
 Let $F$ be the set of pairs $(i,j)$ such that
 there exist a vertex $x\in D_{v_1}$ and a vertex $y\in D_{v_2}$
 such that $\lab_{v_1}(x)=i$, $\lab_{v_2}(y)=j$, and $x$ is adjacent
 to $y$ in $G$.
 Let $N$ be the set of integers $i\in \{1,2,\ldots,2C\}$ such that
 there exists  a vertex $v$ of label $i$ in $D_{v_1}$
   or a vertex $v$ of label $(i-C)$ in $D_{v_2}$
   such that $v $ has no neighbors in $V(G)\setminus D_v$.
 Then let
 \[t^*=
 (\mathop{\circ}_{i\in N}\rho_{i\to 2C+1})
 ((\mathop{\circ}_{(i,j)\in F} \eta_{i,j+C})(t_{v_1}\oplus
 \rho_{1\to C+1}(\rho_{2\to C+2}(\cdots (\rho_{C\to 2C}(t_{v_2}))\cdots)))).\]
 Then $t^*$ is a $(2C+1)$-expression with value $(G_v,\lab^*)$ say.
 So far, $\lab^*$ satisfies the condition that if two vertices in
 $D_v$
 have  the same $\lab^*$ value, then they have the identical set of
 neighbors out of $D_v$.

 Since $\cutrk_G(D_v)\le k$ and
 $c_G(D_v)\le C$, 
 there are at most $C$ distinct vertices in $D_v$
 having some neighbors in $V(G)\setminus D_v$.
 We obtain  a $(2C+1)$-expression $t'$ from $t^*$ by
 applying
 $\rho_{i\to    j}$
 to merge two labels $i,j$
 whenever vertices of $i$ and $j$ have the same nonempty set of
 neighbors
 in $V(G)\setminus D_v$.
 Let $(G_v,\lab')$ be the value of $t'$.
 Then $\lab'$ has at most $C+1$ distinct values.

 Let $t_v$ be a $(2C+1)$-expression obtain from $t'$ by applying
 $\rho_{i\to j}$ operations whenever $2C\ge i>C\ge j$ and there are no vertices
 of label $j$. Then $t_v$ is what we wanted.
 This proves the induction claim.

 Now $t_r$ is a $(2C+1)$-expression of $G$ and therefore the clique-width
 of $G$ is at most $2C+1$.
\end{proof}
\begin{LEM}\label{lem:cwdrwd2}
 Let $G$ be a graph with at least one edge.
  Then
 \[\rwd(G)\le \cwd(G)\le 2 \lambda_{G}(\rwd(G))-1.\]
\end{LEM}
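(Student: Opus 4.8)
The plan is to handle the two inequalities separately. The left inequality $\rwd(G)\le\cwd(G)$ is exactly the lower bound already recorded in \eqref{eq:rwdcw} (Oum and Seymour), so nothing new is needed there; I would simply invoke it. The entire content of the lemma is therefore the upper bound $\cwd(G)\le 2\lambda_{G}(\rwd(G))-1$, and the natural strategy is to feed an \emph{optimal} rank-decomposition into Lemma~\ref{lem:cwdrwd}, which already converts any rank-decomposition $(T,\mu)$ into the clique-width estimate $\cwd(G)\le 2\beta_{G}(T,\mu)+1$. Thus the whole problem reduces to showing that a width-$\rwd(G)$ rank-decomposition satisfies $\beta_{G}(T,\mu)\le\lambda_{G}(\rwd(G))-1$.

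To bound $\beta_{G}(T,\mu)$, I would fix a rank-decomposition $(T,\mu)$ of width $k=\rwd(G)$ and consider an arbitrary edge $e$ of $T$ together with the bipartition $(X_{e},Y_{e})$ of $V(G)$ it induces through $\mu^{-1}$. By definition of the width, $\cutrk_{G}(X_{e})\le k$, and since the cut-rank is symmetric (the rank of a $\mathrm{GF}(2)$-matrix equals that of its transpose) we also have $\cutrk_{G}(Y_{e})=\cutrk_{G}(X_{e})\le k$. Now Lemma~\ref{lem:rank} applies to both $X_{e}$ and $Y_{e}$: the bipartite adjacency matrices of $G\langle X_{e}\rangle$ and $G\langle Y_{e}\rangle$ each have at most $\lambda_{G}(k)$ distinct rows. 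Since $c_{G}(\cdot)$ counts only the distinct \emph{nonzero} rows, this immediately yields $c_{G}(X_{e}),c_{G}(Y_{e})\le\lambda_{G}(k)$, hence $\beta_{G}(T,\mu)\le\lambda_{G}(k)$, which through Lemma~\ref{lem:cwdrwd} would only give the weaker $\cwd(G)\le 2\lambda_{G}(k)+1$.

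The delicate point, which I expect to be the real obstacle, is shaving off the last additive constant to reach $2\lambda_{G}(k)-1$, that is, upgrading each estimate to $c_{G}(X_{e}),c_{G}(Y_{e})\le\lambda_{G}(k)-1$. The idea is to exploit precisely the gap between ``distinct rows'', which Lemma~\ref{lem:rank} controls, and ``distinct nonzero rows'', which $c_{G}$ counts: the all-zero row is never counted by $c_{G}$, whereas the empty neighborhood sits among the neighborhoods enumerated by $\lambda_{G}$. Concretely, tracing the proof of Lemma~\ref{lem:rank} one restricts to a set $S$ of $\cutrk_{G}(X_{e})\le k$ columns spanning the column space, so that two rows coincide iff they agree on $S$ and a row vanishes iff its restriction to $S$ vanishes; the distinct nonzero rows then inject into $\{N_{G}(v)\cap S : v\in\overline{S}\}\setminus\{\emptyset\}$, a set of size at most $\lambda_{G}(S)-1\le\lambda_{G}(k)-1$ once the empty neighborhood is discounted. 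Granting $c_{G}(X_{e}),c_{G}(Y_{e})\le\lambda_{G}(k)-1$ for every edge $e$ gives $\beta_{G}(T,\mu)\le\lambda_{G}(k)-1$, and Lemma~\ref{lem:cwdrwd} then closes the argument with $\cwd(G)\le 2\beta_{G}(T,\mu)+1\le 2\lambda_{G}(\rwd(G))-1$. The single step that genuinely needs care is justifying that the empty neighborhood can always be charged against the zero row; this is exactly where the sharp constant is won or lost, and it is the part of the proof I would write out most carefully.
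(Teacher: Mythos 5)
Your proposal follows the paper's own route step for step: take an optimal rank-decomposition, bound $c_G(X_e)$ and $c_G(Y_e)$ for every tree-edge via Lemma~\ref{lem:rank}, establish $\beta_G(T,\mu)\le\lambda_G(\rwd(G))-1$, and finish with Lemma~\ref{lem:cwdrwd} (the left inequality being quoted from \eqref{eq:rwdcw} in both cases). The only difference is cosmetic: the paper implements the sharpening from $\lambda_G(k)$ to $\lambda_G(k)-1$ by appending a zero row to $M_e$ and invoking Lemma~\ref{lem:rank} for the augmented matrix $M'_e$, whereas you implement it by tracing the proof of Lemma~\ref{lem:rank} and ``discounting'' the empty neighborhood. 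These are the same device in different clothing.

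However, the step you yourself flag as delicate is a genuine gap, and it cannot be closed. Your claim is that the distinct nonzero rows inject into $\{N_G(v)\cap S:v\in\overline{S}\}\setminus\{\emptyset\}$, ``a set of size at most $\lambda_G(S)-1$.'' The injection is correct, but the cardinality bound holds only if $\emptyset$ actually occurs among the neighborhoods $\{N_G(v)\cap S:v\in\overline{S}\}$, i.e., only if some vertex outside $S$ has no neighbor in $S$; the definition of $\lambda_G$ counts the empty neighborhood only when it is realized by a vertex. Nothing guarantees this, and no argument can: for $G=K_2$ one has $\rwd(G)=1$ and $\lambda_G(1)=1$ (for $X$ a single vertex, the unique vertex of $\overline{X}$ has nonempty neighborhood), yet $\cwd(K_2)=2>1=2\lambda_G(\rwd(G))-1$, so the inequality $c_G(X_e)\le\lambda_G(k)-1$ --- and indeed the lemma exactly as stated --- fails. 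To be fair, the paper's own proof commits the same error: Lemma~\ref{lem:rank} is applied to $M'_e$, which is not the bipartite adjacency matrix of $G\langle X\rangle$ for any $X$, and for $K_2$ the matrix $M'_e=\left(\begin{smallmatrix}1\\0\end{smallmatrix}\right)$ has two distinct rows while $\lambda_G(1)=1$. What both arguments legitimately prove is $c_G(X_e)\le\lambda_G(k)$ and hence $\cwd(G)\le 2\lambda_G(\rwd(G))+1$; the stated $-1$ is recovered only under the convention that $\lambda_G(X)$ always counts $\emptyset$, i.e., replacing $\lambda_G(X)$ by $\lvert\{N_{G\langle X\rangle}(v)\mid v\in\overline{X}\}\cup\{\emptyset\}\rvert$. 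Either repair leaves all later applications of the lemma intact, but your write-up (like the paper's) should not present the empty-neighborhood discount as automatic.
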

\begin{proof}
 Let $\rwd(G)\le k$ and let $(T,\tau)$ be a rank-decomposition of $G$
of  width  at most $k$. Since $\abs{E(G)}>0$, we have that $k>0$.
For every   $e\in E(T)$,
  the rank of the bipartite adjacency matrix $M_{e}$ of
  $G\langle \tau^{-1}(X_{e})\rangle$ is at most
 $k$.  If $\rank(M_{e})=0$, then $c_G(X_{e})=0$.
 Now let us assume that $\rank(M_{e})>0$.
 Let
 $M'_{e}=\left(\begin{smallmatrix}M_{e} \\0\end{smallmatrix}\right)$
 be  the matrix obtained by adding a zero row to $M_{e}$.
 By Lemma~\ref{lem:rank},
 $M'_{e}$ has at most $\lambda_{G}(k)$ distinct rows.
 Then $M_{e}$ has at most $\lambda_{G}(k)-1\geq 0$ nonzero distinct rows.
 In any case, we deduce that $c_G(X_{e})\le \lambda_{G}(k)-1$ and thus $\beta_{G}(T,\tau)\le \lambda_{G}(k)-1$.
 By Lemma~\ref{lem:cwdrwd}, we deduce that
 $\cwd(G)\le 2\cdot \lambda_{G}(\rwd(G))-1$.
\end{proof}

Lemma~\ref{lem:cwdrwd2} along with the fact that
$\lambda_{G}(k)\le 2^{k}$ yields
the exponential upper bound in~\eqref{eq:rwdcw}.
In general, such a bound is unavoidable
because  Corneil and Rotics~\cite{CR2005}
showed that,
for each $k$, there is a graph $G_k$ such that
$\cwd(G_k)\ge 2^{\lfloor k/2\rfloor-1}$
and
$\twd(G_k)= k$,
which implies
$\rwd(G_k)\le k+1$ by \eqref{eq:rwdtwd}.
In the following sections we refine the
bound in~\eqref{eq:rwdcw} for certain graph classes.
Our main tool is to derive
better estimations of the function~$\lambda_{G}$.

\section{Graphs with no complete graph minor}\label{sec:minor}
Our goal of this section is to 
prove that, for a fixed $r>2$, the tree-width, 
the rank-width and the clique-width of a graph with no $K_r$-minor
are within a constant factor, where the constant only depends on $r$.
We also aim to make this section as a reference to be used later for
other graph classes.

\medskip

Let us consider the following  problems for a fixed positive integer
$r$.
\begin{enumerate}[P1:]
\item
  Does there exist a constant $c_1$ 
  such that, for all $n>0$, 
  every $n$-vertex graph 
  has at most $c_1n$ edges
  if it has no $K_r$-minor?
\item
  Does there exist a constant $c_2$ 
  such that, for all $n>0$,  
  every $n$-vertex graph 
  has at most $c_2n$   cliques
  if it has no $K_r$-minor?

\item
  Does there exist a constant $c_3$
  such that, for all $n>0$,  every $n$-vertex hypergraph
  has at most $c_3n$ hyperedges
  if its incidence graph 
  has no $K_r$-minor?
\item
  Does there exist a constant $c_4$
  such that, for all $n>0$,
  every binary matrix of rank $n$ has at most $c_4n$  distinct rows
  if the bipartite graph having the matrix as a bipartite adjacency
  matrix 
  has no $K_r$-minor?

\item
  Does there exist a constant $c_5$ such that, for all $n>0$, 
  the tree-width of every graph 
  of rank-width $n$
  is at most $c_5n$
  if the graph has no $K_r$-minor?
\end{enumerate}
Note that these problems are trivial if $r\le 2$
and therefore we will assume that $r>2$.

The problem P1 was answered by Kostochka
\cite{Kostochka1982,Kostochka1984}
and Thomason \cite{Thomason1984} independently. Later Thomason
determined the exact constant as follows.
\begin{PROP}[P1;{Thomason \cite{Thomason2001}}]\label{prop:thomason}
  There is a constant $\alpha$ such that
  every $n$-vertex graph with no $K_r$-minor
  has at most $(\alpha r \sqrt{\log r})n$ edges.
  Moreover, this result is tight up to the value
  of  $\alpha=0.319\ldots+o(1)$.
\end{PROP}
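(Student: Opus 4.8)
The plan is to treat the upper and lower bounds separately, after a normalisation. Since $e(G)=\tfrac12\,d_{\mathrm{avg}}(G)\,|V(G)|$, the upper bound is equivalent to the forcing statement that every graph of average degree at least $(2\alpha+o(1))\,r\sqrt{\log r}$ contains $K_r$ as a minor, and the tightness clause asserts that no smaller constant forces the minor. I would prove the forcing direction first and then construct near-extremal examples to match it.

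For the forcing direction, the right order of magnitude comes from the core lemma that a graph of minimum degree $d$ contains a $K_r$-minor with $r$ of order $d/\sqrt{\log d}$. To reach a clean minimum-degree setting I would first pass to a subgraph of minimum degree at least half the average degree; then I would produce the minor by a random contraction. Concretely, I would choose random branch sets of a carefully optimised size $s$ (equivalently, contract the components of a random sparse subgraph) and estimate the probability that the contracted multigraph has the large clique minor I want. The tension in choosing $s$ (larger branch sets are more likely to be pairwise adjacent but more expensive to keep connected) is exactly what produces the $\sqrt{\log r}$ factor, and optimising it is where the constant first appears.

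For tightness I would exhibit extremal graphs, and the natural candidates are (pseudo)random graphs. I would take $G(n,p)$ with $p$ tuned so that the average degree sits just below $(2\alpha)\,r\sqrt{\log r}$ and show that asymptotically almost surely there is no $K_r$-minor. Since a $K_r$-minor is a family of $r$ disjoint connected branch sets that are pairwise joined by an edge, I would bound the expected number of such models, optimised over the branch-set sizes, by a first-moment computation, and show it tends to $0$ at the critical density; a second-moment or concentration refinement gives the sharp threshold. The value $\alpha=0.319\ldots$ then emerges as the solution of the resulting entropy/optimisation equation, which one evaluates numerically.

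The hard part will be matching the two constants exactly. The halving step above, and the crude first-moment bound, each lose constant factors, so they only pin down the threshold up to a factor and give $c(r)=\Theta(r\sqrt{\log r})$ in the manner of Kostochka and Thomason. To collapse this to a single constant one must show that the densest $K_r$-minor-free graphs are asymptotically nearly regular and pseudorandom, so that the random-graph computation is genuinely extremal and the minimum degree of a near-extremal graph is essentially its average degree. Establishing this stability and pseudorandomness of near-extremal graphs, together with the delicate second-moment analysis that identifies $\alpha$ precisely, is the technical heart of Thomason's argument, and is where I would expect essentially all of the difficulty to lie; for the present paper the proposition is invoked only as a black box, to answer problem P1 with $c_1=\alpha r\sqrt{\log r}$.
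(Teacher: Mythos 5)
The paper never proves this proposition: it is imported verbatim as an external result of Thomason \cite{Thomason2001} (with the order of magnitude due earlier to Kostochka \cite{Kostochka1982,Kostochka1984} and Thomason \cite{Thomason1984}), and everything downstream of it --- the choice $c_1=\alpha r\sqrt{\log r}$ for P1, Lemma~\ref{lem:cliqueminor}, Proposition~\ref{prop:p2minor} --- treats it as a black box. So there is no in-paper argument to compare your attempt against; the only meaningful comparison is with Thomason's own paper.

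Measured against that, your outline reconstructs the correct architecture: rewriting the edge bound as a forcing statement for average degree, passing to a subgraph of large minimum degree, building the minor by randomly chosen branch sets with an optimised size (which is where the $\sqrt{\log r}$ and the constant arise), taking dense (pseudo)random graphs as the extremal examples via a first-moment count of minor models, and recognising that pinning down the single constant $\alpha=0.319\ldots$ requires a stability argument showing near-extremal graphs are essentially pseudorandom. This is, in broad strokes, how the Kostochka--Thomason bound and then Thomason's sharp version actually go. However, as a proof your text is purely programmatic: each substantive claim (that the random contraction succeeds at the stated density, that the first moment vanishes at the critical density, that extremal graphs are quasirandom) is announced as a goal rather than established, and the last of these constitutes the bulk of Thomason's paper. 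You say this yourself in your final paragraph, and for the purposes of the present paper that stance is appropriate --- the authors likewise invoke the result without proof --- but one should be clear that what you have written is an accurate roadmap to the literature, not an argument that could be checked step by step.
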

This proposition implies that $c_1=\alpha r\sqrt{\log r}$ satisfies
$c_1$.
Now we will explain that any upper bound of $c_i$
will give upper bounds for $c_{i+1}$.
Moreover, our proof technique can be applied to classes of
graphs more general than graphs with no $K_r$-minor which we will
discuss later.

To answer P2, we claim that
every $n$-vertex graph with no
$K_r$-minor will have at most $2^{O(r\sqrt{\log r})}n $ cliques.
To see this, we use a simple induction argument by counting
cliques  containing a vertex $v$ of the minimum degree
to   show that every $n$-vertex graph with no
$K_r$-minor has at most $2^{2c_1}n$ cliques if $c_1\ge 1/2$.
More precisely, one can prove that
if an $n$-vertex graph  is $d$-degenerate and $n\ge d$,
then it has at most $2^d(n-d+1)$ cliques, see Wood \cite{Wood2007}.
We now aim to  show that the above bound on the number of cliques
can be improved to $2^{O(r\log\log r)}n$. 

\begin{LEM}\label{lem:cliqueminor}
  There is a constant $\alpha$ such that, for $r\ge 2$, 
   every $n$-vertex graph with no $K_r$-minor
  has at most $\frac1{r+1}\binom{r+1}{k}{(2\alpha\sqrt{\log r})}^{k-1}n$
  cliques of size $k$
  for $1\le k\le r-1$.
\end{LEM}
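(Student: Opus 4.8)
The plan is to prove the statement by induction on $r$, feeding Thomason's edge bound (Proposition~\ref{prop:thomason}) into the recursion at every level. I would fix $\alpha$ to be the constant of Proposition~\ref{prop:thomason}, so that every $m$-vertex graph with no $K_s$-minor has at most $\alpha s\sqrt{\log s}\cdot m$ edges, and write $\beta_s=2\alpha\sqrt{\log s}$, so that the target bound for $K_s$-minor-free graphs reads $\frac{1}{s+1}\binom{s+1}{k}\beta_s^{\,k-1}n$. Two sanity checks guide the whole argument: for $k=1$ the bound is exactly $n$ (the number of vertices, hence of $1$-cliques), and for $k=2$ it equals $\alpha r\sqrt{\log r}\cdot n$, which is precisely Thomason's edge count. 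So within each fixed $r$ the case $k=1$ is a trivial internal base, and the case $k=2$ is recovered directly from Proposition~\ref{prop:thomason}.

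The engine of the induction is the identity obtained by double counting incidences between $k$-cliques and their vertices:
\[
 k\cdot N_k(G)=\sum_{v\in V(G)} N_{k-1}\bigl(G[N_G(v)]\bigr),
\]
where $N_j(H)$ denotes the number of cliques of size $j$ in $H$; indeed a $k$-clique containing $v$ is exactly $v$ together with a $(k-1)$-clique inside $G[N_G(v)]$. The crucial structural observation is that $G[N_G(v)]$ contains \emph{no} $K_{r-1}$-minor: a $K_{r-1}$-minor in the neighborhood of $v$ would, together with the single branch set $\{v\}$ (which is adjacent to all of $N_G(v)$), yield a $K_r$-minor in $G$. This is what lets the induction decrease $r$ and $k$ simultaneously.

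With these in hand, assuming the statement for $r-1$ and applying it to each $G[N_G(v)]$ with $m=\deg_G(v)$ and $1\le k-1\le (r-1)-1$, I get $N_{k-1}(G[N_G(v)])\le \frac1r\binom{r}{k-1}\beta_{r-1}^{\,k-2}\deg_G(v)$. Summing over $v$ and using $\sum_v \deg_G(v)=2\abs{E(G)}\le \beta_r\, r\, n$ yields $N_k(G)\le \frac1k\binom{r}{k-1}\beta_{r-1}^{\,k-2}\beta_r\, n$. The argument is then closed by the absorption identity $\frac1k\binom{r}{k-1}=\frac{1}{r+1}\binom{r+1}{k}$ together with the monotonicity $\beta_{r-1}\le\beta_r$, which turns $\beta_{r-1}^{\,k-2}\beta_r$ into $\beta_r^{\,k-1}$. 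The base case $r=2$ is immediate, since a $K_2$-minor-free graph is edgeless and only $k=1$ lies in range.

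I expect the main obstacle to be identifying the correct induction scheme rather than any hard estimate. The naive route through degeneracy (every $K_r$-minor-free graph is $2\alpha r\sqrt{\log r}$-degenerate) loses a factor of $k$ and replaces the delicate binomial coefficient by a crude $\binom{d}{k-1}$, which overshoots the claimed bound. The two ideas that make the bound reproduce itself exactly are the reduction of each neighborhood to a $K_{r-1}$-minor-free graph and the Pascal-type identity $\frac1k\binom{r}{k-1}=\frac{1}{r+1}\binom{r+1}{k}$; once these are in place, plugging Thomason's edge count into the recursion at each step is routine.
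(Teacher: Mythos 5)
Your proof is correct and is essentially the same argument as the paper's: induction on $r$, the observation that each neighborhood $G[N_G(v)]$ is $K_{r-1}$-minor-free, double counting each $k$-clique $k$ times, Thomason's edge bound to control $\sum_v \deg_G(v)$, and the identity $\frac1k\binom{r}{k-1}=\frac1{r+1}\binom{r+1}{k}$ together with monotonicity of $\sqrt{\log s}$ to close the recursion. The only difference is presentational (your explicit $\beta_s$ notation and sanity checks at $k=1,2$); nothing substantive diverges from the paper.
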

\begin{proof}
  Let $G$ be an $n$-vertex graph with no $K_r$-minor.
  We take $\alpha$ from Proposition~\ref{prop:thomason}.
  We apply induction on $r$. If $r=2$ or $k=1$, then it is trivial.
  So we may assume that $r>2$ and $k>1$.
  For a vertex $v$, the subgraph induced on
  the neighbors of $v$ contains at most $\frac{1}{r} \binom{r}{k-1}
  (2\alpha \sqrt{\log(r-1)})^{k-2}\deg (v)$ cliques of size $k-1$
  because it has no  $K_{r-1}$-minor.
  Since each clique of size $k$ is counted $k$ times,   $G$ has
  at most $\frac{1}{k} \sum_{v\in V(G)} \frac{1}{r} \binom{r}{k-1}
  (2\alpha \sqrt{\log(r-1)})^{k-2}\deg (v)$ cliques of size $k$.
  The conclusion follows because
  $\sum_{v\in V(G)}\deg(v)\le (2\alpha r\sqrt{\log {r}})n$
  by Proposition~\ref{prop:thomason}
  and 
  $\binom{r+1}{k}=\frac{r+1}{k}\binom{r}{k-1}$.
\end{proof}
\begin{PROP}[P2]\label{prop:p2minor}
  There is a constant $\mu$ such that,
  for $r>2$, 
  every $n$-vertex graph with  no $K_r$-minor has at most
  $n2^{\mu r\log \log r}$ cliques.
\end{PROP}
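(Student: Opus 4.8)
The plan is to obtain the total clique count by summing the size-stratified bound from Lemma~\ref{lem:cliqueminor} over all admissible clique sizes and then collapsing the sum with the binomial theorem. First I would observe that a graph with no $K_r$-minor contains no $K_r$ as a subgraph (a subgraph is a minor), so its clique number is at most $r-1$; together with the single empty clique this gives that the total number of cliques equals $1+\sum_{k=1}^{r-1} s_k$, where $s_k$ denotes the number of cliques of size $k$.

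Writing $\beta = 2\alpha\sqrt{\log r}$ for the quantity appearing in Lemma~\ref{lem:cliqueminor}, that lemma supplies $s_k \le \frac{1}{r+1}\binom{r+1}{k}\beta^{k-1}n = \frac{n}{(r+1)\beta}\binom{r+1}{k}\beta^{k}$ for $1\le k\le r-1$. Summing over this range and enlarging it to $0\le k\le r+1$ only increases the sum, so by the binomial theorem $\sum_{k=1}^{r-1}\binom{r+1}{k}\beta^k \le (1+\beta)^{r+1}$. Hence the total number of cliques is at most $1+\frac{n}{(r+1)\beta}(1+\beta)^{r+1}$.

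It then remains to verify that $(1+\beta)^{r+1}=2^{O(r\log\log r)}$. Taking base-$2$ logarithms, $\log_2\big((1+\beta)^{r+1}\big)=(r+1)\log_2\big(1+2\alpha\sqrt{\log r}\big)$, and since $\log_2\big(1+2\alpha\sqrt{\log r}\big)=\tfrac12\log_2\log r + O(1)$ as $r\to\infty$, this is $O(r\log\log r)$. The prefactor $\frac{1}{(r+1)\beta}$ is bounded above by a constant (indeed by $1$ once $r$ is large), and the additive $1$ from the empty clique is absorbed because $1\le n$; choosing $\mu$ large enough to dominate the implied $O$-constant and to cover the finitely many small values of $r$ then yields the bound $n\,2^{\mu r\log\log r}$.

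I expect no genuine obstacle here, since the substance of the argument is already carried by Lemma~\ref{lem:cliqueminor}; this proposition is the routine consolidation that sums the per-size estimates. The only point deserving care is confirming that the base $\sqrt{\log r}$ contributes a $\log\log r$ rather than a $\log r$ factor to the exponent — precisely the saving over the crude degeneracy bound $2^{O(r\sqrt{\log r})}n$ noted before the lemma — and this is exactly what the computation $\log_2(1+2\alpha\sqrt{\log r})=\tfrac12\log_2\log r+O(1)$ confirms.
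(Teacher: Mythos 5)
Your proposal is correct and takes essentially the same approach as the paper: both sum the per-size bound of Lemma~\ref{lem:cliqueminor} over $1\le k\le r-1$ via the binomial theorem and extract the exponent $O(r\log\log r)$ from the fact that $\log\bigl(1+2\alpha\sqrt{\log r}\bigr)=\tfrac12\log\log r+O(1)$. The only cosmetic difference is bookkeeping: the paper first bounds the factor $(\sqrt{\log r})^{k-1}$ uniformly by $(\sqrt{\log r})^{r}$ and applies the binomial theorem to $\sum_i\binom{r}{i}(2\alpha)^i=(1+2\alpha)^r$, whereas you apply it directly to $(1+2\alpha\sqrt{\log r})^{r+1}$, arriving at the same $2^{O(r\log\log r)}n$ bound.
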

\begin{proof}
  Let $\alpha$ be the constant in  Proposition \ref{prop:thomason}.
  We may assume that $\alpha\ge 0.5$ by taking a larger value if necessary. (It is likely that $\alpha$ is bigger than $0.5$ if we want it to be satisfied by all graphs, not just large graphs.)
 Since $\log r\ge 1$, we have that 
 the number of cliques of size $i$ 
 is at most $n\frac{1}{r+1}\binom{r+1}{i}(2\alpha)^{i-1} (\sqrt {\log r})^{r-1}
 \le n\binom{r}{i-1} (2\alpha)^{i-1} (\sqrt{\log r})^{r}$
  when $1\le i\le r-1$
  by Lemma~\ref{lem:cliqueminor}.
  Let $C$ be the number of cliques in the graph. Then we obtain the following.
  \begin{align*}
    C&\le 1+n(\sqrt{\log r})^{r}\sum_{i=1}^{r-1}\binom{r}{i-1} (2\alpha)^{i-1} \\
    &\le n(\sqrt{\log r})^{r}\sum_{i=0}^{r}\binom{r}{i} (2\alpha)^{i}
   &\text{because }(2\alpha)^r\ge 1\\
   &= n(\sqrt{\log r})^{r} (1+2\alpha)^r.
 \end{align*}
  Let $c=\frac{\log(1+2\alpha)}{\log\log3}$. Then
  $\log(1+2\alpha)\le \log \log r$
  and therefore
  $C\le n 2^{(\frac12+c)\frac1{\log 2} r\log\log r}$.
\end{proof}
\begin{PROP}[P3]\label{prop:p2p3}
  Let $c_2$ be a constant satisfying P2.
  Then every $n$-vertex hypergraph has at most $c_2n$ hyperedges
  if its incidence graph has no $K_r$-minor; therefore
  $c_3= c_2$ satisfies P3.
\end{PROP}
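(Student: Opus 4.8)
\section*{Proof proposal}

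The plan is to realize the hyperedges of $H$ as distinct cliques of an auxiliary graph on the vertex set $V(H)$ that still has no $K_r$-minor, and then to feed that graph into Proposition~\ref{prop:p2minor}. Writing $n=\abs{V(H)}$, it suffices to produce a graph $G$ with $V(G)=V(H)$, with no $K_r$-minor, and with at least $\abs{E(H)}$ cliques: Proposition~\ref{prop:p2minor} would then give $\abs{E(H)}\le(\text{number of cliques of }G)\le c_2 n$, which is exactly the asserted bound with $c_3=c_2$.

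The first candidate for $G$ is the $2$-section of $H$ (join two vertices whenever they lie in a common hyperedge). There every hyperedge is a clique and distinct hyperedges are distinct cliques, so this $G$ has at least $\abs{E(H)}$ cliques. The difficulty here, and the main obstacle of the whole argument, is that the $2$-section need not be $K_r$-minor-free even when $I(H)$ is: a single hyperedge of size at least $r$ becomes a $K_r$ in the $2$-section, whereas in $I(H)$ it is only a star, contributing no cycle at all. So the rich clique structure of the $2$-section is invisible as a minor of $I(H)$, and the naive choice of $G$ fails.

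To repair this I would work with a genuine minor of $I(H)$. Contracting each hyperedge-vertex $e$ of $I(H)$ into one of its incident vertices $\phi(e)\in e$ produces a union of stars $G_\phi$ on $V(H)$ which, being a minor of $I(H)$, automatically has no $K_r$-minor. The task then becomes to choose the centers $\phi$ so that the hyperedges still inject into the cliques of $G_\phi$. Concretely, I would match the empty hyperedge and the at most $n$ singleton hyperedges with the empty clique and the $n$ cliques of size one, and then try to assign to each hyperedge of size at least two a distinct edge of $G_\phi$, i.e.\ a system of distinct representatives drawn from the pairs each hyperedge contains. This is exactly where the hypothesis must enter: if some family of hyperedges spanned fewer pairs than its own cardinality, the associated subdivided bipartite pattern sitting inside $I(H)$ should already force a $K_r$-minor, contradicting the assumption, so a Hall-type condition for the assignment ought to hold.

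The step I expect to be hardest is precisely this last one: converting ``$I(H)$ has no $K_r$-minor'' into the combinatorial guarantee that enough distinct cliques are available, and doing so tightly enough to recover the clean constant $c_3=c_2$ rather than a slightly weaker linear bound. The remaining ingredients — that the contraction yields a minor of $I(H)$, that having no $K_r$-minor is inherited by minors, and the final application of Proposition~\ref{prop:p2minor} — are routine.
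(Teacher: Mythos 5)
Your overall frame --- building a $K_r$-minor-free graph on $V(H)$ whose cliques injectively receive the hyperedges, and then invoking Proposition~\ref{prop:p2minor} --- is exactly the paper's, and your diagnosis of why the $2$-section fails is correct. The gap is in the repair. The Hall-type lemma you need (``if $I(H)$ has no $K_r$-minor, then the hyperedges of size at least $2$ admit a system of distinct representatives consisting of pairs'') is false. Take $H$ on $V(H)=\{a,b,c\}$ with hyperedges $\{a,b\},\{a,c\},\{b,c\},\{a,b,c\}$: its incidence graph is a subdivision of $K_4$, hence planar and with no $K_5$-minor, yet four hyperedges of size at least $2$ compete for only three available pairs, so no such system of distinct representatives exists, for any choice of the centers $\phi$. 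More generally, for $m=r-2$ let $H$ consist of all subsets of size at least $2$ of an $m$-element set. Since the hyperedge side of $I(H)$ is an independent set, at most one branch set of any clique minor can avoid $V(H)$, so $I(H)$ has no $K_{m+2}$-minor, i.e.\ no $K_r$-minor; but there are $2^m-m-1$ hyperedges of size at least $2$ and only $\binom{m}{2}$ pairs, so the Hall condition fails for every $r\ge 5$. (Note also that your $G_\phi$, being a union of stars, can still contain triangles and larger cliques, so its cliques are not only edges; but this observation alone does not rescue an injection that is routed through pairs.)

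The missing idea --- and the paper's actual proof --- is to change the hypergraph rather than only the contraction pattern: as long as some hyperedge $e$ has a proper subset that is not a hyperedge, replace $e$ by such a subset. This keeps $\lvert E(H)\rvert$ unchanged, only deletes edges of $I(H)$ (so $K_r$-minor-freeness is preserved), and terminates with a downward-closed hypergraph. Now let $G$ be the graph on $V(H)$ whose edges are exactly the hyperedges of arity $2$; it is a star minor of $I(H)$, hence has no $K_r$-minor, and by downward closure \emph{every} hyperedge spans a clique of $G$, so the desired injection is simply $e\mapsto e$. Proposition~\ref{prop:p2minor} then gives $\lvert E(H)\rvert\le c_2 n$ at once. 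In the counterexample above this is precisely what happens: $G$ is the triangle on $\{a,b,c\}$, and the hyperedge $\{a,b,c\}$ is matched to the triangle itself --- a clique that is invisible to any pair-based assignment.
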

\begin{proof}
  Let $H$ be a hypergraph with $n$ vertices whose incidence graph
  $I(H)$ has no $K_r$-minor. We may assume that every subset of a
  hyperedge $e$ of $H$ is a hyperedge of $H$, because otherwise we may replace
  $e$ by its proper subset. Let $G$ be a graph on $V(H)$ obtained from
  $H$ by   deleting all hyperedges of arity other than $2$. It is easy to observe
  that $G$ is a minor of $I(H)$ (actually, $G$ is a topological minor or, even better, 
  a star minor of $I(H)$) and therefore $G$ has no $K_r$-minor.
  Moreover for each hyperedge $e$ of $H$, $G$ has a corresponding clique
  on the same vertex set. Thus, the number of hyperedges of $H$ is at most
  $c_2n$.
\end{proof}
\begin{PROP}[P4]\label{prop:p3p4}
  Let $c_3$ be a constant satisfying P3. 
  Then every binary matrix of rank $n$ has at most $c_3n$ distinct
  rows
  if the bipartite graph having the matrix as a bipartite adjacency
  matrix has no $K_r$-minor;
  this implies that
  $c_4= c_3$ satisfies P4.
\end{PROP}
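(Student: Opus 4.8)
The plan is to encode the matrix as a hypergraph on $n$ vertices whose incidence graph sits inside the given bipartite graph, and then invoke Proposition~\ref{prop:p2p3} (P3). Let $M$ be a binary matrix of rank $n$ whose associated bipartite graph $B$ (having $M$ as its bipartite adjacency matrix) has no $K_r$-minor.

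First I would fix $n$ columns $c_1,\dots,c_n$ of $M$ forming a basis of the column space over $\mathrm{GF}(2)$; this is possible since $\rank M = n$. The crucial observation is that two rows of $M$ are equal if and only if they agree on these $n$ basis columns. Indeed, every column of $M$ is a $\mathrm{GF}(2)$-linear combination of $c_1,\dots,c_n$, so the entry of any row in an arbitrary column is the corresponding sum of that row's entries in the basis columns, hence is determined by them. Consequently the number of distinct rows of $M$ equals the number of distinct $0/1$-patterns that the rows induce on $\{c_1,\dots,c_n\}$.

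Next I would build a hypergraph $H$ with vertex set $\{c_1,\dots,c_n\}$, where each distinct row-pattern contributes the hyperedge consisting of those basis columns in which it has a $1$. By the previous paragraph distinct rows yield distinct hyperedges, so $H$ has exactly as many hyperedges as $M$ has distinct rows, and it has $n$ vertices. Choosing one representative row for each hyperedge, the incidence graph $I(H)$ is isomorphic to the subgraph of $B$ induced on the $n$ basis columns together with these representative rows: a basis column $c_i$ is adjacent to a representative row in $I(H)$ exactly when that row has a $1$ in column $c_i$, i.e.\ exactly when the two are adjacent in $B$. Since passing to a subgraph cannot create a $K_r$-minor, $I(H)$ has no $K_r$-minor.

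Finally I would apply P3 to $H$: its incidence graph has no $K_r$-minor and it has $n$ vertices, so it has at most $c_3 n$ hyperedges, whence $M$ has at most $c_3 n$ distinct rows. This shows $c_4=c_3$ satisfies P4. The only step that requires genuine care is the reduction through the basis columns — verifying that agreement on a spanning set of columns forces equality of rows over $\mathrm{GF}(2)$ — and this is precisely where the rank hypothesis enters; the remaining work is routine bookkeeping identifying $I(H)$ with a subgraph of $B$.
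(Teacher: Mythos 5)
Your proof is correct and takes essentially the same route as the paper: restrict attention to a basis of $n$ columns, turn the distinct rows into (distinct) hyperedges of a hypergraph on those $n$ columns, observe that its incidence graph is a subgraph of $B$ and hence has no $K_r$-minor, and apply P3 (Proposition~\ref{prop:p2p3}). The only difference is that you make explicit the linear-algebra fact that rows agreeing on the basis columns agree everywhere, which the paper leaves implicit in its ``we may assume $M$ has $n$ columns and no identical rows'' reductions.
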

\begin{proof}
  Let $M$ be a binary matrix of rank $n$.
  Let $G$ be the bipartite graph having $M$ as a bipartite adjacency matrix.
  We claim that $M$ has at most $c_3n$ distint rows.
  We may assume that $M$ has $n$ columns by deleting linearly
  dependent columns. We may also assume that $M$ has no identical
  rows. Then let $H$ be a
  hypergraph such that its  incidence graph is $G$
  and the vertices  of $H$ correspond to  vertices of $G$
  representing the columns of $M$.
  (Note that in this paper, a hypergraph has no parallel edges.)
  Since $G$ has no $K_r$-minor, $H$ has at most $c_3n$ hyperedges
  and therefore $M$ has at most $c_3n$ rows.
\end{proof}
\begin{PROP}[P5]\label{prop:p4p5}
  Let $c_4$ be a constant satisfying P4.
  If $G$ is a graph 
  with no $K_r$-minor,
  then
  $\cwd(G)\le 2c_4\rwd(G)-1$ and
  $\twd(G)+1\le3(r-2)(2c_4\rwd(G)-1)$.

  Therefore $c_5=6(r-2)c_4$ satisfies P5.
\end{PROP}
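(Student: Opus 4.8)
The plan is to prove the two displayed inequalities in turn and then read off $c_5$. We may assume $G$ has at least one edge, so that Lemma~\ref{lem:cwdrwd2} applies; otherwise $\twd(G)=\rwd(G)=0$ and everything is trivial. The clique-width bound comes from feeding the hypothesis P4 into Lemma~\ref{lem:cwdrwd2}, which has already reduced $\cwd(G)$ to the combinatorial quantity $\lambda_{G}(\rwd(G))$. The tree-width bound then follows by combining this clique-width bound with the Gurski--Wanke inequality \eqref{eq:gw}, once we observe that a $K_r$-minor-free graph contains no $K_{r-1,r-1}$ subgraph.

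First I would bound $\lambda_{G}(k)$ with $k=\rwd(G)$. Fix $X\subseteq V(G)$ with $\abs{X}\le k$. Since $G\langle X\rangle$ is a subgraph of $G$, it has no $K_r$-minor, and its bipartite adjacency matrix $\B_{G\langle X\rangle}$ is a binary matrix of rank $\cutrk_G(X)\le\abs{X}\le k$. The neighborhoods counted by $\lambda_{G}(X)$ are the distinct columns of $\B_{G\langle X\rangle}$, equivalently the distinct rows of its transpose $\B_{G\langle X\rangle}^{\mathsf T}$; this transpose is again a binary matrix of rank $\cutrk_G(X)$ whose associated bipartite graph is the same $G\langle X\rangle$. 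Applying P4 (Proposition~\ref{prop:p3p4}) to it gives $\lambda_{G}(X)\le c_4\,\cutrk_G(X)\le c_4 k$, hence $\lambda_{G}(k)\le c_4 k$. Lemma~\ref{lem:cwdrwd2} then yields
\[\cwd(G)\le 2\lambda_{G}(\rwd(G))-1\le 2c_4\rwd(G)-1.\]
The one point requiring care is the row/column bookkeeping: P4 counts rows whereas $\lambda_G$ counts neighborhoods of $\overline{X}$, so one passes to the transpose, using that transposition changes neither the rank nor the underlying bipartite graph.

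For the tree-width bound, the key elementary fact is that $K_{r-1,r-1}$ has $K_r$ as a minor: with bipartition $\{a_1,\dots,a_{r-1}\}$ and $\{b_1,\dots,b_{r-1}\}$, the $r$ sets $\{a_1,b_1\},\dots,\{a_{r-2},b_{r-2}\},\{a_{r-1}\},\{b_{r-1}\}$ are connected and pairwise adjacent. Hence a graph with no $K_r$-minor has no subgraph isomorphic to $K_{r-1,r-1}$. Applying \eqref{eq:gw} with parameter $r-1$ (valid since $r>2$) gives $\twd(G)+1\le 3(r-2)\cwd(G)$, and substituting the clique-width bound proved above gives
\[\twd(G)+1\le 3(r-2)\cwd(G)\le 3(r-2)(2c_4\rwd(G)-1),\]
which is the second claimed inequality. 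Since the right-hand side is strictly less than $6(r-2)c_4\rwd(G)$, the constant $c_5=6(r-2)c_4$ satisfies P5.

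I do not expect a serious obstacle here, as the proof is essentially an assembly of earlier results; the two places to be careful are transferring $K_r$-minor-freeness to the cut graphs $G\langle X\rangle$ so that P4 is applicable, and gaining the sharper factor $r-2$ (rather than the $r-1$ one would get from the cruder route through $K_{r,r}$-freeness) via the $K_{r-1,r-1}$-minor observation.
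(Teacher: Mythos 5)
Your proof is correct and follows essentially the same route as the paper's: bound $\lambda_G(\rwd(G))\le c_4\rwd(G)$ by applying P4 to the bipartite adjacency matrix of $G\langle X\rangle$, feed this into Lemma~\ref{lem:cwdrwd2} to get $\cwd(G)\le 2c_4\rwd(G)-1$, then use that $K_r$-minor-freeness forbids $K_{r-1,r-1}$ as a subgraph and apply \eqref{eq:gw} with parameter $r-1$. The only differences are cosmetic: you spell out the transpose bookkeeping and the explicit branch sets showing $K_{r-1,r-1}$ has a $K_r$-minor, both of which the paper merely asserts.
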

\begin{proof}
  Let $G$ be a graph of rank-width at most $n$ with no $K_r$-minor.
  We will only need the following two facts:
  \begin{itemize}
  \item $G$ has no $K_{r-1,r-1}$
    as a subgraph.
  \item Every bipartite subgraph of $G$ has no $K_r$-minor.
  \end{itemize}
  
  First we claim that the clique-width is at most $2c_4n-1$.
  By Lemma~\ref{lem:cwdrwd2},
  it is enough to  prove that $\lambda_G(n)\le c_4n$.
  Let $X$ be a subset of at most $n$ vertices of $G$. (Here, $n$ is
  the rank-width of $G$.)
  Let $M$ be the bipartite adjacency matrix of $G$ whose rows and
  columns are indexed by $V(G)\setminus X$ and $X$, respectively.
  Then obviously $\rank(M)\le n$. Moreover the bipartite graph having
  $M$ as a bipartite adjacency matrix has rank at most $n$.
  Thus $M$ has at most $c_4n$ distinct rows and therefore $\lambda_G(n)\le
  c_4n$.
  This proves the claim.

  Since $G$ has no $K_r$-minor,
  $G$ does not contain $K_{r-1,r-1}$ as a subgraph.
  By \eqref{eq:gw},
  the tree-width of $G$ is at most
  $3(r-2)(2c_4n-1)-1$. 
\end{proof}
Let us summarize what we have for graphs with no $K_r$-minor.
\begin{THM}\label{thm:minor}
  There is a constant $\mu$ such that
  for each integer $r> 2$,
  if $G$ is a graph with no $K_r$-minor, then
  \begin{align*}
    \cwd(G)&< 2\cdot 2^{\mu r\log\log r}\rwd(G),\\
  \twd(G)+1&< 6(r-2)2^{\mu r \log \log r} \rwd(G).
  \end{align*}
\end{THM}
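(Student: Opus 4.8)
The plan is to combine the chain of implications already established in this section with the quantitative clique bound from Proposition~\ref{prop:p2minor}. The theorem is really a bookkeeping consolidation: Propositions \ref{prop:p2minor}, \ref{prop:p2p3}, \ref{prop:p3p4}, and \ref{prop:p4p5} form a pipeline P2 $\Rightarrow$ P3 $\Rightarrow$ P4 $\Rightarrow$ P5, each asserting that a constant valid for one problem carries over (sometimes up to a factor) to the next. So the strategy is to instantiate this pipeline with the explicit value of $c_2$ and track the constants through to the tree-width bound.

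First I would fix the constant $\mu$ supplied by Proposition~\ref{prop:p2minor}, so that any $n$-vertex graph with no $K_r$-minor has at most $c_2 n$ cliques where $c_2 = 2^{\mu r\log\log r}$ (valid for $r>2$). By Proposition~\ref{prop:p2p3} we may take $c_3 = c_2$, and by Proposition~\ref{prop:p3p4} we may take $c_4 = c_3 = c_2$. Hence the constant governing the number of distinct rows of a bounded-rank binary matrix (whose associated bipartite graph is $K_r$-minor-free) is again $2^{\mu r\log\log r}$.

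Next I would invoke Proposition~\ref{prop:p4p5} with this value of $c_4$. Its first conclusion gives
\[
\cwd(G) \le 2c_4\rwd(G) - 1 < 2\cdot 2^{\mu r\log\log r}\rwd(G),
\]
which is exactly the first displayed inequality of the theorem. Its second conclusion gives
\[
\twd(G)+1 \le 3(r-2)\bigl(2c_4\rwd(G)-1\bigr) < 6(r-2)\,2^{\mu r\log\log r}\,\rwd(G),
\]
which is the second displayed inequality. Both bounds follow by merely substituting $c_4 = 2^{\mu r\log\log r}$ into the statement of Proposition~\ref{prop:p4p5} and discarding the $-1$ terms to pass from $\le$ to the strict $<$ asserted in the theorem.

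I do not expect a genuine obstacle here, since every analytic ingredient — the edge bound of Thomason, the inductive clique count of Lemma~\ref{lem:cliqueminor}, the translation into hypergraph and matrix language, and the final appeal to \eqref{eq:gw} for tree-width — has already been carried out in the preceding propositions. The only point demanding a little care is the uniformity of the constant $\mu$: I must make sure the same $\mu$ works simultaneously for all $r>2$ and that the various ``$c_{i}=c_{i-1}$'' reductions do not silently require $r$ to be large. Since Proposition~\ref{prop:p2minor} already asserts existence of a universal $\mu$ for all $r>2$, and each reduction preserves the constant verbatim, this uniformity is inherited automatically, and the theorem is simply the assembled statement of the pipeline.
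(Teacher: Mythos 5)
Your proposal is correct and is essentially the paper's own argument: the paper presents Theorem~\ref{thm:minor} as a direct summary of the pipeline P2 $\Rightarrow$ P3 $\Rightarrow$ P4 $\Rightarrow$ P5, instantiated with the constant $c_2 = 2^{\mu r\log\log r}$ from Proposition~\ref{prop:p2minor} and carried through Propositions~\ref{prop:p2p3}, \ref{prop:p3p4}, and \ref{prop:p4p5} exactly as you describe. Your constant-tracking and the passage from $\le$ with the $-1$ terms to the strict $<$ are both valid (using $r>2$ for the tree-width bound), so nothing is missing.
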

\section{Graphs of bounded genus}
\label{sec:planar}

If $\Sigma$ is a surface 
which can be obtained from the
sphere by adding $k$ crosscaps and $h$ handles,
then \emph{Euler genus} $\genus(\Sigma)$ of the surface $\Sigma$
is $k+2h$.
We refer to the book of Mohar and Thomassen \cite{MT2001}
for
more details  on graph embeddings.
\emph{Euler genus} $\genus(G)$ of a graph  $G$
is the minimum $r$ such that the
graph can be embedded into a surface of Euler genus~$r$.

A hypergraph is \emph{planar} if its  incidence graph is
planar, see Zykov~\cite{Zykov1974}. Also a hypergraph is \emph{embeddable}
on a surface of Euler genus $r$ if so is its incidence graph.
For formal definitions of hypergraph embeddings
on surfaces (called ``paintings'')
see~\cite{RS1994}.
\emph{Euler genus} $\genus(H)$
of a hypergraph $H$ is the minimum $r$ such that
$H$ can be embedded into a surface of
Euler genus $r$.

For graphs of Euler genus at most $r$, Euler's formula allows us to answer P1
easily; every $n$-vertex graph of Euler genus  $r$ has at most $3n-6+3r$ edges
if $n\ge 3$.
We may
obtain easy answers to  P2 and P3
by using the fact that such graphs have vertices of small degree.
However, that approach will
give us the following: the number of hyperedges
of a hypergraph 
of Euler genus
at most $r$ is at most $64n+f(r)$ for some function~$f$.
In the next lemma, we improve $64$
to $6$ for P3.

We remark that Wood~\cite{Wood2007} showed that an $n$-vertex planar
graph has at most $8(n-2)$ cliques if $n>2$; This answers P2 for
planar graphs. However, for P3, we can improve $8(n-2)$ to $6n-9$ by the
following proposition.
As a generalization of Wood~\cite{Wood2007},
Dujmovi{\'c} et al.~\cite{DFJW2009} showed that an $n$-vertex graph embedded on a surface
has at most $8n+\frac32 2^{\omega}+o(2^\omega)$,
where $\omega$ is the maximum integer such that the complete graph $K_\omega$ can be embedded on the same surface. Notice that
their bound can also be used to answer P3 but 
our bound for P3 improves their $8n+O(1)$ to $6n+O(1)$ for a fixed surface.

\begin{PROP}[P3]
\label{prop:surface}
Let $H$ be an $n$-vertex hypergraph 
embeddable on a surface of Euler genus $r$, $r>0$, $n>2$.
Then
$H$ has at most $(6n-9+5r)$ hyperedges.
\end{PROP}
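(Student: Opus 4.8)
The plan is to partition the hyperedges of $H$ according to their cardinality and to bound each part by counting with Euler's formula on (subgraphs of) the incidence graph $I(H)$. Write $m_i$ for the number of hyperedges of size exactly $i$ and $m_{\ge 3}$ for the number of hyperedges of size at least $3$, so that the total number of hyperedges is $m_0+m_1+m_2+m_{\ge 3}$. The empty set is the only hyperedge of size $0$, so $m_0\le 1$, and there are at most $n$ distinct singletons, so $m_1\le n$. These two contributions are trivial; the work lies in the sizes $2$ and $\ge 3$, and the whole point is that their bounds have been arranged to add up to exactly $6n-9+5r$.

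For the size-$2$ hyperedges I would observe that they form a simple graph $G_2$ on $V(H)$: distinct hyperedges give distinct pairs since $H$ has no parallel edges, and each such pair consists of two distinct vertices, so $G_2$ has neither loops nor multiple edges. In $I(H)$ every size-$2$ hyperedge is a vertex of degree $2$, and dissolving all of these degree-$2$ vertices turns the relevant subgraph of $I(H)$ into $G_2$; since dissolution preserves embeddability, $G_2$ embeds on the surface of Euler genus $r$. The edge bound for graphs on such a surface already recorded in the paper (at most $3n-6+3r$ edges when $n\ge 3$) then gives $m_2\le 3n-6+3r$.

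For the hyperedges of size at least $3$ I would pass to the subhypergraph $H'$ consisting of exactly these hyperedges and consider $I(H')$, a bipartite subgraph of $I(H)$ and hence embeddable on the same surface. Bipartiteness forces girth at least $4$, so every face of a cellular embedding is bounded by at least four edges; feeding this into Euler's formula yields the bipartite refinement $|E|\le 2|V|-4+2r$ for a simple bipartite graph on $|V|\ge 3$ vertices embeddable on a surface of Euler genus $r$. Applying it to $I(H')$, whose edge count is $\sum_{|e|\ge 3}|e|\ge 3m_{\ge 3}$ and whose vertex count is the number of incident original vertices (at most $n$) plus $m_{\ge 3}$, I obtain $3m_{\ge 3}\le 2(n+m_{\ge 3})-4+2r$, that is, $m_{\ge 3}\le 2n-4+2r$. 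The case $m_{\ge 3}=0$ is immediate, and when $m_{\ge 3}\ge 1$ the graph $I(H')$ has at least four vertices, so the bipartite bound genuinely applies.

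Adding the four estimates gives $1+n+(3n-6+3r)+(2n-4+2r)=6n-9+5r$, as claimed. I expect the main conceptual point to be the split between the two edge bounds: size-$2$ hyperedges only survive dissolution into an ordinary simple graph and therefore need the weaker factor-$3$ bound, whereas size-$\ge 3$ hyperedges keep the incidence graph bipartite and so earn the factor-$2$ bound, and it is exactly this distinction that produces the coefficient $6$ and the additive $5r$. The residual obstacles are purely bookkeeping: verifying that the dissolution producing $G_2$ creates neither loops nor parallel edges, and confirming applicability of the bipartite inequality to $I(H')$ after discarding any isolated vertices on the vertex side.
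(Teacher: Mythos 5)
Your proposal is correct, and it reaches the same four-way split ($1 + n + (3n-6+3r) + (2n-4+2r)$) as the paper, but the argument for hyperedges of arity at least $3$ is genuinely different. The paper stays inside the embedding: for each hyperedge $e=\{v_1,\dots,v_l\}$ it uses the cyclic order of the incident vertices around $e$ on the surface, replaces the star at $e$ by the cycle $v_1v_2\cdots v_lv_1$, and observes that in the resulting graph $F$ (on the $n$ original vertices) each such hyperedge becomes a distinct face; Euler's formula then bounds the number of faces by $2n-4+2r$. You instead never touch the geometry of the embedding: you apply the standard bipartite edge bound $\lvert E\rvert \le 2\lvert V\rvert - 4 + 2r$ to the incidence graph $I(H_{\ge 3})$ and double-count, using that every hyperedge-vertex has degree at least $3$, to get $3m_{\ge 3}\le 2(n+m_{\ge 3})-4+2r$ and hence the same bound $m_{\ge 3}\le 2n-4+2r$. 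Your route buys robustness: it sidesteps the paper's construction of $F$, which needs the cyclic-order bookkeeping and the (slightly delicate) claim that no parallel edges or loops arise and that distinct hyperedges yield distinct faces. The paper's route, in exchange, only ever invokes Euler's formula for ordinary graphs on the surface, whereas you need the girth-$4$ refinement for bipartite graphs on a surface of Euler genus $r$ (including its routine but necessary caveats for forests, disconnected graphs, and very small vertex counts, which you correctly flag and which are standard). Both are sound; the numerical coincidence of the two $2n-4+2r$ bounds is no accident, since the face-count bound and the bipartite edge bound come from the same Euler computation.
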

\begin{proof}
  Since $|E(H)|\le 2^n$,  we may  assume that $n\ge 3$.
  
  We assume that the incidence graph of $H$ is embedded on
  a surface $\Sigma$ of the Euler genus $r=\genus(H)$.
  There is at most $1$ hyperedge of arity $0$ because 
  $\emptyset$ is the only possible hyperedge of  arity $0$.
  It is also trivial that there are at most $n$ hyperedges of arity
  $1$.

  We now count hyperedges of arity at least $2$.
  We define sub-hypergraphs $H_2$ and $H_{\geq 3}$ of $H$ such that
  \[
  V( H_{2})=V(H), \text{ and } E(H_2)=\{e\in E(H)\mid |e|=2\}),\] and
  \[V(H_{\geq 3})= V(H) \text{ and } E(H_{\geq 3})=\{e\in E(H)\mid |e|\geq 3\}).\]
  In other words,
  $H_{2}$ contains the hyperedges of $H$ of arity $2$ and $H_{\geq 3}$
  contains
  the hyperedges of arity greater than $2$.
  Clearly, both $H_{2}$ an $H_{\geq 3}$ are hypergraphs embeddable
  on $\Sigma$.
  Because  $H_{2}$ has no parallel edges or loops,
  by Euler's formula, we
  have  $|E(H_{2})|\le 3n-6+3r$.
  To bound the number of  hyperedges in $H_{\ge 3}$, we construct a
  graph $F$ as follows.
  For each  hyperedge $e=\{v_{1},v_{2},\ldots,v_{l}\}$ whose endpoints
  are cyclically
  ordered as $v_{1},v_{2},\ldots,v_{l},v_{1}$ in the surface, we remove
  $e$ and
  add edges $\{v_{1},v_{2}\},\ldots,\{v_{l-1},v_{l}\},\{v_{l},v_{1}\}$.
  We will not create parallel edges or loops.
  Then  each hyperedge of $H_{\geq 3}$
  corresponds to a face of the embedding of $F$ in $\Sigma$
  and no two hyperedges are mapped to the same face in $F$.
  The graph~$F$ has $n$ vertices, and, again by  Euler's formula, we derive
  that $|E(H_{\geq 3})|\le 2k-4+2r$.
  So we conclude that $|E(H)|\le 1+n+ (3n-6+3r)+(2n-4+2r)=6n-9+5r$.
\end{proof}

Proposition \ref{prop:surface} is tight; Given any plane triangulation,
we attach a hyperedge of arity $3$ for each triangle. Then we
obtain $6n-9$ hyperedges in the planar hypergraph.

To answer P4 for graphs of Euler genus at most $r$, it is fairly
straightforward to apply the same argument of
Proposition~\ref{prop:p3p4} to deduce that
every binary matrix of rank $n$ has at most $(6n-9+5r)$ distinct rows if
the matrix induces a bipartite graph whose 
Euler genus is at most $r$.

Finally let us consider the problem P5 for graphs of Euler genus at
most $r$.
To mimic the argument of Proposition~\ref{prop:p4p5},
we need to determine the largest complete bipartite graphs with Euler
genus at most $r$.
Ringel \cite{Ringel1965,Ringel1965a} showed that
if $m,n\ge 2$, then
the orientable  genus of $K_{m,n}$ is $\lceil  (m-2)(n-2)/4 \rceil$
and if $m,n\ge 3$, then the nonorientable genus of $K_{m,n}$ is
$\lceil (m-2)(n-2)/2\rceil$.
It follows that if $t>2+\sqrt{2r}$, then
$K_{t,t}$ is not embeddable on a surface of Euler genus $r$.

\begin{THM}\label{thm:surface}
  Let $G$ be a graph embeddable on a surface of Euler genus $r$.
  Then
  \begin{align*}
    \cwd(G)&< 12\rwd(G)+10r,\\
    \twd(G)+1&< 3(2+\sqrt{2r})(6\rwd(G)+5r).
 \end{align*}
\end{THM}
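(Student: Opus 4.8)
The plan is to instantiate the template of Proposition~\ref{prop:p4p5} on a surface, using in place of the no-$K_r$-minor counting the surface analogue of P4 already recorded before Theorem~\ref{thm:surface}: every binary matrix of rank $m$ whose associated bipartite graph has Euler genus at most $r$ has at most $6m-9+5r$ distinct rows (this is the transcription of Proposition~\ref{prop:p3p4}, with Proposition~\ref{prop:surface} playing the role of the hyperedge bound). Everything then reduces to estimating $\lambda_G$ and feeding the estimate into Lemma~\ref{lem:cwdrwd2} and the Gurski--Wanke inequality~\eqref{eq:gw}.

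For the clique-width bound, put $n=\rwd(G)$ and take any $X\subseteq V(G)$ with $\abs{X}\le n$. The graph $G\langle X\rangle$ is a subgraph of $G$, so it embeds on the same surface, and its bipartite adjacency matrix $\B_{G\langle X\rangle}$ has rank $\cutrk_G(X)\le\abs{X}\le n$. Since $\lambda_G(X)$ is exactly the number of distinct rows of $\B_{G\langle X\rangle}$ and the count is monotone in the rank, the surface P4 gives $\lambda_G(X)\le 6n-9+5r$, hence $\lambda_G(\rwd(G))\le 6\rwd(G)-9+5r$. Lemma~\ref{lem:cwdrwd2} then yields
\[
\cwd(G)\le 2\lambda_G(\rwd(G))-1\le 2\bigl(6\rwd(G)-9+5r\bigr)-1<12\rwd(G)+10r,
\]
the first inequality of the theorem.

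For the tree-width bound I convert this into a tree-width estimate via~\eqref{eq:gw}, which needs a forbidden complete bipartite subgraph. The discussion preceding the theorem shows that $K_{t,t}$ does not embed on a surface of Euler genus $r$ whenever $t>2+\sqrt{2r}$; taking $t$ to be the least integer exceeding $2+\sqrt{2r}$, the graph $G$ therefore has no $K_{t,t}$ subgraph, and $t-1\le 2+\sqrt{2r}$. Applying~\eqref{eq:gw} and substituting the clique-width bound gives
\[
\twd(G)+1\le 3(t-1)\,\cwd(G)< 3(2+\sqrt{2r})\bigl(12\rwd(G)+10r\bigr),
\]
a bound of the claimed product form once one writes $12\rwd(G)+10r=2\bigl(6\rwd(G)+5r\bigr)$.

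Since Proposition~\ref{prop:surface}, the surface P4, and the $K_{t,t}$ embedding threshold are all in hand, the remaining work is pure bookkeeping, and I expect the only delicate points to be the following. First, the degenerate ranks $m\le 2$ (and the planar case $r=0$, which uses the tightness refinement $6m-9$ rather than Proposition~\ref{prop:surface} itself): there one bounds the number of distinct rows by $2^{m}\le 4$, which is dominated by $6\rwd(G)+5r$ as soon as $G$ has an edge, so the displayed inequalities survive. Second, and more importantly, the exact leading constant in the last display must be reconciled with the statement, since the factor $2$ produced by $\cwd\le 2\lambda_G-1$ in Lemma~\ref{lem:cwdrwd2} has to be carried through the constants $6$ and $5$ of Proposition~\ref{prop:surface}; checking this arithmetic — in particular whether the coefficient of $(2+\sqrt{2r})\bigl(6\rwd(G)+5r\bigr)$ should be $3$ or $6$, the latter being what the planar estimate $\twd(G)<72\rwd(G)-1$ suggests — is where I would concentrate the care.
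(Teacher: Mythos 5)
You follow the paper's proof essentially line for line: the relaxed estimate $\lambda_G(n)<6n+5r$ (valid for all $r\ge 0$ and $n\ge 1$) extracted from Proposition~\ref{prop:surface}, fed into Lemma~\ref{lem:cwdrwd2} to get $\cwd(G)\le 2\lambda_G(\rwd(G))-1<12\rwd(G)+10r$, and then the choice of the least integer $t>2+\sqrt{2r}$, the Ringel formulas to exclude $K_{t,t}$ as a subgraph, and \eqref{eq:gw} to convert clique-width into tree-width. The clique-width inequality is therefore established exactly as in the paper, and your treatment of the degenerate cases ($n\le 2$, $r=0$) matches the paper's remark that the relaxed bound absorbs them.

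The point you flagged for ``concentrated care'' is indeed the crux, and your suspicion is justified: along this route the coefficient is $6$, not $3$. The chain
\[
\twd(G)+1\le 3(t-1)\cwd(G)< 3(2+\sqrt{2r})\cdot 2\bigl(6\rwd(G)+5r\bigr)=6(2+\sqrt{2r})\bigl(6\rwd(G)+5r\bigr)
\]
is also all that the paper's own proof establishes; its final sentence silently drops the factor $2$ coming from Lemma~\ref{lem:cwdrwd2}, so the constant $3$ in the displayed statement of Theorem~\ref{thm:surface} is not supported by the argument given there. The introduction corroborates the factor $6$: the planar bound quoted there, $\twd(G)<72\rwd(G)-1$, is precisely $6\cdot 2\cdot 6$ at $r=0$, whereas the theorem's formula at $r=0$ would give the stronger $36\rwd(G)-1$. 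So the discrepancy you identified is a factor-of-two slip in the paper's stated theorem (or an appeal to an argument not present in the text), not a gap in your reasoning: what you prove, and what the paper's proof actually proves, is $\cwd(G)<12\rwd(G)+10r$ together with $\twd(G)+1<6(2+\sqrt{2r})\bigl(6\rwd(G)+5r\bigr)$.
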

\begin{proof}
  Let $t$ be a minimum integer such that $t>2+\sqrt{2r}$.
  Then $K_{t,t}$ is not embeddable on the surface of Euler genus $r$
  and therefore
  $G$ has no $K_{t,t}$ subgraph. By Gurski and Wanke's inequality
  \eqref{eq:gw},
  we have
  \[
  \twd(G)+1\le  3(t-1)\cwd(G).
  \]

  From Proposition~\ref{prop:surface},
  we have $\lambda_G(n)\le 6n-9+5r$ unless $r=0$ and $n\le 2$.
  We use a relaxed inequality $\lambda_G(n)< 6n+5r$, true for all $r\ge 0$ and $n\ge 1$.
  Then $\cwd(G)< 12\rwd(G)+10r$ and 
  $\twd(G)+1< 3(2+\sqrt{2r})(6\rwd(G)+5r)$.
\end{proof}
\section{Graphs  excluding topological minors.}
\label{sec:tm}

We now relax our problems to graphs with no $K_r$ topological
minor. As we did in Section~\ref{sec:minor}, we begin by answering
P1; how many edges can a graph  have if it has no $K_r$ 
topological minor?

\begin{PROP}[P1; Bollob\'{a}s and Thomason \cite{BT1998};
Koml\'{o}s and Szemer\'edi \cite{KS1996b}]
\label{prop:anto}
There is a constant $\beta$ such that 
for every $r$, 
every graph of average degree at least $\beta r^2$ 
contains $K_{r}$ as a topological minor. 
Subsequently every $n$-vertex graph with more than $\frac\beta2 r^2 n$
edges
contains $K_r$ as a topological minor.
\end{PROP}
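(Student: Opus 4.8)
The plan is to prove Proposition~\ref{prop:anto} by first establishing the qualitative statement---that bounded average degree forces the presence of a $K_r$ topological minor---and then deriving the explicit edge-count consequence by a routine averaging argument. I would treat the existence of the constant $\beta$ as the substantive content, citing the results of Bollob\'as and Thomason~\cite{BT1998} and of Koml\'os and Szemer\'edi~\cite{KS1996b}, who independently showed that average degree $\Omega(r^2)$ suffices to force a topological $K_r$; this quadratic dependence is known to be optimal, matching the lower bound coming from sparse random graphs of girth exceeding the relevant threshold. The first step, then, is simply to fix $\beta$ so that every graph of average degree at least $\beta r^2$ contains $K_r$ as a topological minor.

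For the second sentence of the proposition, I would argue by contrapositive. Suppose $G$ is an $n$-vertex graph with $\abs{E(G)} > \frac{\beta}{2} r^2 n$. The average degree of $G$ is
\[
 \frac{1}{n}\sum_{v\in V(G)} \deg_G(v) = \frac{2\abs{E(G)}}{n} > \frac{2}{n}\cdot \frac{\beta}{2} r^2 n = \beta r^2.
\]
Hence $G$ has average degree strictly greater than $\beta r^2$, so by the first part $G$ contains $K_r$ as a topological minor. This is exactly the stated implication, and the passage from the average-degree formulation to the edge-count formulation uses nothing beyond the handshake identity $\sum_v \deg_G(v) = 2\abs{E(G)}$.

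The only genuine obstacle is the first part, whose proof is the deep theorem of~\cite{BT1998,KS1996b} and which I would not attempt to reprove; within the scope of this paper it is cited as a black box, precisely as Proposition~\ref{prop:thomason} was cited for the minor case. I would note in passing that one could instead quote a version giving an explicit value of $\beta$, but since the downstream bounds in the topological-minor theorem are stated only up to the $2^{O(r\log r)}$ order of magnitude, tracking the exact constant is unnecessary. The remaining care is bookkeeping: ensuring the strict inequality in the hypothesis is preserved through the averaging so that the threshold ``average degree at least $\beta r^2$'' is actually met, which the computation above handles cleanly.
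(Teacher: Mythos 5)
Your proposal is correct and matches the paper's treatment exactly: the paper states this proposition purely as a citation of Bollob\'as--Thomason and Koml\'os--Szemer\'edi, with no proof given, and the second sentence is indeed just the handshake identity $\sum_v \deg_G(v) = 2\abs{E(G)}$ converting the edge-count hypothesis into the average-degree hypothesis. Your explicit averaging computation and black-box citation of the deep result are precisely what the paper intends (it even notes afterwards, as you suggest, that an explicit value $\beta=10$ can be extracted from Thomas and Wollan's theorem).
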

Thomas and Wollan's Theorem \cite{TW2005} can be used to obtain that
$\beta=10$ satisfies the above proposition; see Diestel
\cite[Theorem 7.2.1]{Diestel2005} with the corrected proof in the
web site of Diestel\footnote{\url{http://diestel-graph-theory.com/corrections/3rd.edn.corrections.pdf}}.

If we use the fact that every graph with no $K_r$ topological minor
has a vertex of degree at most $\beta r^2$, 
we can easily show that every $n$-vertex graph with no $K_r$
topological minor can have at most $2^{\beta r^2}n$ cliques.
We aim to improve $2^{O(r^2)}n$ to $2^{O(r\log r)}n$
as we did in Proposition~\ref{prop:p2minor}.

\begin{LEM}\label{lem:cliquetopminor}
  Let $r\ge 2$. There is a constant $\beta$ such that
  every $n$-vertex graph with no $K_r$ topological minor
  has at most $\frac1{r+1} \binom{r+1}{k} (\beta r)^{k-1}n$ cliques of
  size $k$ for $1\le k\le r-1$.
\end{LEM}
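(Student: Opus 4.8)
The plan is to mirror the inductive argument used for \mbox{Lemma~\ref{lem:cliqueminor}}, replacing the topological-minor-free edge bound of \mbox{Proposition~\ref{prop:anto}} for the minor-free bound of \mbox{Proposition~\ref{prop:thomason}}. The target estimate is structurally identical: we want at most $\frac{1}{r+1}\binom{r+1}{k}(\beta r)^{k-1}n$ cliques of size $k$, where now the per-size ``cost factor'' is $\beta r$ (coming from the $\Theta(r^2)$ edge bound) rather than $2\alpha\sqrt{\log r}$ (coming from the $\Theta(r\sqrt{\log r})$ edge bound). The key observation enabling the induction is that if $G$ has no $K_r$ as a topological minor, then for every vertex $v$ the subgraph $G[N_G(v)]$ induced on its neighborhood has no $K_{r-1}$ as a topological minor: a $K_{r-1}$ topological minor inside the neighborhood of $v$, together with $v$ joined to the $r-1$ branch vertices, would furnish a $K_r$ topological minor in $G$.

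First I would take $\beta$ from \mbox{Proposition~\ref{prop:anto}}, so that every $n$-vertex graph with no $K_r$ topological minor has at most $\tfrac{\beta}{2}r^2 n$ edges, equivalently $\sum_{v\in V(G)}\deg_G(v)\le \beta r^2 n$. The induction runs on $r$, with base cases $r=2$ or $k=1$ being trivial (a graph with no $K_2$ topological minor is edgeless, and cliques of size $1$ are just the $n$ vertices). For the inductive step with $r>2$ and $k>1$, I would count cliques of size $k$ by summing over their possible vertices: each $k$-clique containing $v$ restricts to a $(k-1)$-clique in $G[N_G(v)]$. By the induction hypothesis applied to $G[N_G(v)]$ (which has no $K_{r-1}$ topological minor), the number of $(k-1)$-cliques there is at most $\frac{1}{r}\binom{r}{k-1}(\beta(r-1))^{k-2}\deg_G(v)$. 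Since each $k$-clique is counted once for each of its $k$ vertices, the total number of $k$-cliques in $G$ is at most $\frac{1}{k}\sum_{v}\frac{1}{r}\binom{r}{k-1}(\beta(r-1))^{k-2}\deg_G(v)$.

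To close the bound, I would substitute $\sum_v \deg_G(v)\le \beta r^2 n$ and use the Pascal-type identity $\binom{r+1}{k}=\frac{r+1}{k}\binom{r}{k-1}$, exactly as in the proof of \mbox{Lemma~\ref{lem:cliqueminor}}. This gives a bound of the form $\frac{1}{r+1}\binom{r+1}{k}\cdot\frac{\beta r^2}{r-1}\cdot(\beta(r-1))^{k-2}n$ cliques of size $k$, and the routine step is to check that the factor $\frac{\beta r^2}{r-1}(\beta(r-1))^{k-2}=\beta^{k-1}\frac{r^2}{r-1}(r-1)^{k-2}=\beta^{k-1}r^2(r-1)^{k-3}$ is dominated by $(\beta r)^{k-1}$ for the relevant range $2\le k\le r-1$ (using $r-1<r$), so that the induction hypothesis reproduces with $r$ in place of $r-1$.

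The main obstacle I anticipate is bookkeeping the constant cleanly: the naive substitution produces a factor $\beta(r-1)$ from the inductive hypothesis together with $\beta r^2$ from the degree sum, and one must verify that the resulting coefficient is genuinely bounded by $(\beta r)^{k-1}$ uniformly over $k$, rather than accumulating an extra multiplicative slack at each induction level. This is the only place where the argument differs qualitatively from \mbox{Lemma~\ref{lem:cliqueminor}}, since there the polynomial factor $r^2$ versus the square-root factor $\sqrt{\log r}$ changes the form of the cost term; I would want to make sure the chosen $\beta$ (possibly enlarged to some absolute constant $\ge 1$) absorbs the $r^2/(r-1)$ overhead against the $(\beta r)$ growth so the bound propagates exactly.
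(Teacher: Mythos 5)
Your proposal follows exactly the paper's proof: induction on $r$, the observation that $G[N_G(v)]$ has no $K_{r-1}$ topological minor, the per-vertex count via the induction hypothesis, the degree-sum bound from Proposition~\ref{prop:anto}, and the identity $\frac1k\binom{r}{k-1}=\frac1{r+1}\binom{r+1}{k}$. All the ideas are right; in fact your exponent $(\beta(r-1))^{k-2}$ in the per-vertex count is the correct consequence of the induction hypothesis, whereas the paper's own write-up states $(\beta(r-1))^{k-1}$ at that point, evidently a typo.

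The one defect is an arithmetic slip in your final substitution, and it is worth fixing because it lands exactly where the bound has no slack. Summing the per-vertex counts gives at most $\frac{1}{kr}\binom{r}{k-1}(\beta(r-1))^{k-2}\sum_v\deg_G(v)\le \frac{1}{r+1}\binom{r+1}{k}\cdot\frac{\beta r^2}{r}\cdot(\beta(r-1))^{k-2}\,n$; that is, the combined factor is $\beta r\,(\beta(r-1))^{k-2}$, not $\frac{\beta r^2}{r-1}(\beta(r-1))^{k-2}$ as you wrote (you divided the degree sum by $r-1$ instead of $r$). With the correct factor, the needed domination $\beta r\,(\beta(r-1))^{k-2}\le(\beta r)^{k-1}$ reduces to $(r-1)^{k-2}\le r^{k-2}$, which holds for every $k\ge 2$, so the induction closes with the very same $\beta$ as in Proposition~\ref{prop:anto} and your anticipated ``obstacle'' disappears. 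With your factor, by contrast, the claimed domination is genuinely false at $k=2$: there the lemma's bound $\frac{1}{r+1}\binom{r+1}{2}(\beta r)n=\frac{\beta r^2}{2}n$ coincides exactly with the edge bound of Proposition~\ref{prop:anto}, so even the extra $\frac{r}{r-1}$ overhead breaks the inequality. (Your fallback of taking the lemma's constant larger than the one in Proposition~\ref{prop:anto} would repair this, but with correct arithmetic it is unnecessary.)
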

\begin{proof}
  We take the same $\beta$ of Proposition~\ref{prop:anto}.
  We proceed by induction on $r$.
  Let $G$ be an $n$-vertex graph with no $K_r$ topological minor.
  We may assume that $k\ge 2$ and $r\ge 3$.
  For each vertex $v$, there are at most $\frac{1}{r}
  \binom{r}{k-1}(\beta (r-1))^{k-1}\deg(v)$ cliques of size $k$
  containing $v$.
  Since each clique of size $k$ is counted $k$ times,
  there are at most $\frac1{kr} \binom{r}{k-1}(\beta (r-1))^{k-1}
  (2|E(G)|)$ cliques of size $k$.
  By Proposition~\ref{prop:anto}, $2|E(G)|\le \beta r^2n$.
  The conclusion follows because
  $\frac1k\binom{r}{k-1}=\frac1{r+1}\binom{r+1}{k}$.
\end{proof}
\begin{PROP}[P2]\label{prop:cliquetopminor}
  There is a constant $\tau$ such that, for $r>2$, 
  every $n$-vertex graph with no $K_r$ topological minor
  has at most $2^{\tau r\log r}n$ cliques.
\end{PROP}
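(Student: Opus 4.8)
The plan is to follow exactly the route of Proposition~\ref{prop:p2minor}, replacing the minor-based clique bound by its topological-minor analogue. The essential input is Lemma~\ref{lem:cliquetopminor}, which bounds the number of cliques of each fixed size $k$ for $1\le k\le r-1$. First I would observe that a graph with no $K_r$ topological minor contains no clique of size $r$: such a clique would be a $K_r$ subgraph, and hence trivially a $K_r$ topological minor. Consequently every clique has size at most $r-1$, so the total clique count $C$ equals $1$ (for the empty clique) plus the sum, over $1\le k\le r-1$, of the per-size bounds supplied by the lemma.

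Next I would carry out the elementary algebra. Using the identity $\frac1{r+1}\binom{r+1}{k}=\frac1k\binom{r}{k-1}\le\binom{r}{k-1}$, the number of cliques of size $k$ is at most $n\binom{r}{k-1}(\beta r)^{k-1}$, where $\beta$ is the constant of Proposition~\ref{prop:anto}. Summing, re-indexing by $j=k-1$, and extending the summation range to $0\le j\le r$ (which also absorbs the leading $+1$), the binomial theorem gives
\[
C\le n\sum_{j=0}^{r}\binom{r}{j}(\beta r)^{j}=n(1+\beta r)^{r}.
\]

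Finally I would take logarithms to read off the exponent. Writing $(1+\beta r)^r=2^{r\log(1+\beta r)}$ (logarithms to base $2$), it remains to bound $\log(1+\beta r)$ by a constant multiple of $\log r$. Since $1+\beta r\le(1+\beta)r$ and $\log r\ge\log 3>0$ for $r>2$, we get $\log(1+\beta r)\le\log r+\log(1+\beta)\le\tau\log r$ with $\tau=1+\log(1+\beta)/\log 3$, which yields $C\le 2^{\tau r\log r}n$. The only point requiring any care is this last step: the naive estimate produces a factor $(1+\beta)^r$ that is itself exponential in $r$, and the role of the hypothesis $r>2$ is precisely to fold this term into the $O(r\log r)$ exponent. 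I do not anticipate any genuine obstacle here beyond bookkeeping of the constants.
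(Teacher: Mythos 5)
Your proof follows essentially the same route as the paper's: both plug the per-size bounds of Lemma~\ref{lem:cliquetopminor} into a sum over clique sizes, apply the binomial theorem to reach a bound of the form $n(1+\beta r)^{r}$, and then fold the resulting $2^{O(r)}$ factor into the exponent $2^{\tau r\log r}$ using $\log r\ge \log 3$ for $r>2$. The only assertion you leave unjustified is that extending the sum to $j\le r$ absorbs the leading $+1$; this needs either a lower bound on $\beta$ (the paper derives $\beta\ge \tfrac{6}{25}$ from the fact that planar graphs have no $K_5$ topological minor) or, more simply, the crude estimate $C\le n+n(1+\beta r)^r\le 2n(1+\beta r)^r$ with the extra factor $2$ absorbed into $\tau$ --- either fix is routine bookkeeping of the same kind the paper itself performs.
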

\begin{proof}
  Let $G$ be an $n$-vertex graph with no $K_r$ topological minor.
  Let $\beta$ be the constant in Proposition~\ref{prop:anto}.
  Since planar graphs have no $K_5$ topological minor,
  $\frac{25}2\beta\ge 3$ and so $\beta\ge \frac{6}{25}$.
  We may assume that $n\ge 3$ by assuming that $2^{3\tau \log 3}\ge 2$.
  By Lemma~\ref{lem:cliquetopminor}, $G$ has at most
  $C=1+\frac1{r+1}\sum_{k=1}^{r-1}\binom{r+1}{k}(\beta r)^{k-1}n$ cliques.
  \begin{align*}
    C&\le \frac4{3(r+1)} \sum_{k=1}^{r-1} \binom{r+1}k (\beta r)^{k-1}
    n
    &\text{because }1+\frac1n\le \frac43,\\
    &\le \frac13 \left(1+\frac1{\beta r}\right) (1+\beta r)^{r} n\\
    &\le \frac{43}{54} \left(\left(\beta+\frac13\right) r\right)^rn
    &\text{because }\beta r\ge \frac{18}{25}.
  \end{align*}
  Therefore if we let $\tau=\max(\frac{1}{3\log 3}, \frac1{\log 2}+
  \frac{\log (\beta+\frac13)}{\log2\log3})$, then
  $2^{\tau r\log r}n\ge ((\beta+\frac13) r)^rn\ge C$.
\end{proof}
When $\beta=10$, $\max(\frac{1}{3\log 3}, \frac1{\log 2}+
\frac{\log (\beta+\frac13)}{\log2\log3})<4.51$ and therefore $\tau=4.51$
satisfies Proposition~\ref{prop:cliquetopminor}.

We can deduce the following theorem
from Proposition~\ref{prop:cliquetopminor}
by using almost identical proofs of Propositions~\ref{prop:p2p3},
\ref{prop:p3p4}, and \ref{prop:p4p5}.
\begin{THM}\label{thm:topminor}
  There is a constant $\tau$ such that
  for every integer $r>2$, if $G$ is a graph with no $K_r$ topological
  minor,
  then
  \begin{align*}
    \cwd(G)&< 2\cdot 2^{\tau r\log r} \rwd(G),\\
    \twd(G)+1&<\frac{3}{4} (r^2+4r-5) 2^{\tau r \log r} \rwd(G).
 \end{align*}
\end{THM}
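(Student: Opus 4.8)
The plan is to transport the chain of implications P2 $\Rightarrow$ P3 $\Rightarrow$ P4 $\Rightarrow$ P5 developed in Section~\ref{sec:minor} to the topological-minor setting, starting from Proposition~\ref{prop:cliquetopminor} in place of Proposition~\ref{prop:p2minor}. The structural fact that lets every step survive verbatim is that the class of graphs with no $K_r$ topological minor is closed under taking subgraphs and under the star-minor/topological-minor passages used in those arguments (topological-minor containment is transitive and subgraph-monotone). First I would re-run the proof of Proposition~\ref{prop:p2p3}: given a hypergraph $H$ on $n$ vertices whose incidence graph $I(H)$ has no $K_r$ topological minor, the graph $G$ formed by its arity-$2$ hyperedges is a star minor, hence a topological minor, of $I(H)$, so $G$ also has no $K_r$ topological minor; applying Proposition~\ref{prop:cliquetopminor} to $G$ bounds the hyperedges of $H$ by $2^{\tau r\log r}n$. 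The proof of Proposition~\ref{prop:p3p4} then applies unchanged, yielding that every binary matrix of rank $n$ whose associated bipartite graph has no $K_r$ topological minor has at most $2^{\tau r\log r}n$ distinct rows.

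Second, I would establish the clique-width bound exactly as in Proposition~\ref{prop:p4p5}. For $G$ with no $K_r$ topological minor and $\rwd(G)=n$, and for any $X\subseteq V(G)$ with $\abs{X}\le n$, the bipartite adjacency matrix $M$ of $G\langle X\rangle$ has rank at most $n$, and the bipartite graph it induces is a subgraph of $G$ and so has no $K_r$ topological minor. The row bound above gives $\lambda_{G}(n)\le 2^{\tau r\log r}n$, and Lemma~\ref{lem:cwdrwd2} yields $\cwd(G)\le 2\lambda_{G}(\rwd(G))-1<2\cdot 2^{\tau r\log r}\rwd(G)$, which is the first inequality.

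Third, for the tree-width bound I would invoke Gurski and Wanke's inequality~\eqref{eq:gw}, which needs a forbidden complete bipartite subgraph. The one genuinely new ingredient—and the step I expect to be the main (if modest) obstacle, since it is exactly what forces the quadratic dependence on $r$ here as opposed to the linear dependence in Proposition~\ref{prop:p4p5}—is to pin down the smallest $s$ for which $K_{s,s}$ contains $K_r$ as a topological minor; then $G$ has no $K_{s,s}$ subgraph and~\eqref{eq:gw} gives $\twd(G)+1\le 3(s-1)\cwd(G)$. I would resolve it by an explicit embedding: split the $r$ branch vertices as $\lceil r/2\rceil$ on one side of $K_{s,s}$ and $\lfloor r/2\rfloor$ on the other, realize each edge joining opposite sides by a single bipartite edge, and route each edge inside a side through one private vertex on the opposite side. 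This consumes $\binom{\lceil r/2\rceil}{2}$ and $\binom{\lfloor r/2\rfloor}{2}$ subdivision vertices on the two sides, so it suffices to take $s=\lfloor r/2\rfloor+\binom{\lceil r/2\rceil}{2}$, which satisfies $s-1\le (r^2+4r-5)/8$ for every $r>2$ (checked separately for $r$ even and odd). Combining with the clique-width bound gives $\twd(G)+1<6(s-1)\,2^{\tau r\log r}\rwd(G)\le \frac34(r^2+4r-5)\,2^{\tau r\log r}\rwd(G)$, as claimed.
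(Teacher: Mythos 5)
Your proof is correct and follows essentially the same route as the paper: the clique-width bound is obtained by transporting the chain of Propositions~\ref{prop:p2p3}, \ref{prop:p3p4} and \ref{prop:p4p5} with Proposition~\ref{prop:cliquetopminor} as the starting point, and the tree-width bound by applying \eqref{eq:gw} to a forbidden complete bipartite subgraph. The only (harmless) difference is that you spell out the embedding of a $K_r$-subdivision explicitly and take $s=\lfloor r/2\rfloor+\binom{\lceil r/2\rceil}{2}$, marginally smaller than the paper's $t=\lceil r/2\rceil+\binom{\lceil r/2\rceil}{2}$, and both choices satisfy the same arithmetic estimate $\le \frac18(r^2+4r-5)+1$ that yields the stated constant.
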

\begin{proof}
  Let $t=\lceil r/2 \rceil+\binom{\lceil r/2 \rceil}{2}$.
  It is obvious that  $K_{t,t}$ has a topological minor isomorphic to
  $K_r$. So if $G$ is a graph with no $K_r$ topological minor, then
  $G$ has no $K_{t,t}$ subgraph and therefore
  \[
  \twd(G)\le 3(t-1)\cwd(G)
  \]
  by \eqref{eq:gw}.
  From Proposition~\ref{prop:cliquetopminor},
  we can deduce that there is a constant
  $\tau$
  such that $\cwd(G)< 2\cdot 2^{\tau r \log r} \rwd(G)$.
  Thus we deduce the desired inequality, as $t-1\le \frac18 r^2+\frac
  r2-\frac58$.
\end{proof}

\section{Graphs of bounded $\nabla_{1}$}

As mentioned in~\cite{NO2008},  for every $r$ there is a 
function $f$ (resp. $f'$) such that if $G$ is a graph excluding $G$ as 
a minor (resp. topological minor), then $\nabla_{1}(G)\leq f(r)$ (resp. $\nabla_{1}(G)\leq f'(r)$) (see also~\cite{NO2008a}). In that sense, the class of graphs with bounded $\nabla_{1}$ is more general than all the classes we considered in the previous sections. However, the same 
line of arguments allows us to prove that when $\nabla_{1}$ is bounded, then tree-width, rank-width, and clique-width 
are still linearly dependent. For this we first observe the following analogue of Proposition~\ref{prop:p2p3}.

\begin{PROP}
\label{newp2p3}
Let $r\geq 1$. Every $n$-vertex hypergraph $H$ with $\nabla_{1}(I(H))\leq r$ has at most $4^r\cdot n$ hyperedges.
\end{PROP}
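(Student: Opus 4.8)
The plan is to follow the proof of Proposition~\ref{prop:p2p3} almost verbatim, but to replace the clique count coming from the excluded-minor hypothesis (P2) by a \emph{degeneracy} estimate adapted to $\nabla_1$. As there, I would work with the graph $G$ on $V(H)$ obtained from $H$ by keeping only the arity-$2$ hyperedges as edges, and I would inject the hyperedges of $H$ into the cliques of $G$, so that it suffices to count the cliques of $G$.

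First I would reduce to the case that $H$ is \emph{downward closed}, i.e.\ every subset of a hyperedge is a hyperedge. Exactly as in Proposition~\ref{prop:p2p3}, whenever a hyperedge $e$ has a proper subset $e'$ that is not a hyperedge we replace $e$ by $e'$; this preserves the number of hyperedges, strictly decreases $\sum_{e}|e|$ (hence terminates), and only removes incidences, so $I(H)$ is replaced by a subgraph of itself. Since $\nabla_1$ cannot increase when passing to a subgraph, the hypothesis $\nabla_1(I(H))\le r$ survives. Once $H$ is downward closed, the vertex set of every hyperedge spans a clique of $G$, and distinct hyperedges give distinct cliques, so $|E(H)|$ is at most the number of cliques of $G$.

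The crux is to show that $G$ is $2r$-\emph{degenerate}; the target constant $4^r=2^{2r}$ is exactly what such a degeneracy yields. I would prove the stronger statement that every induced subgraph $G[W]$, $W\subseteq V(H)$, has density $|E(G[W])|/|W|\le r$. Indeed $G[W]$ is obtained from $I(H)$ by deleting the vertices and hyperedge-vertices not supported on $W$ and then contracting, for each surviving arity-$2$ hyperedge $\{u,v\}$, the edge joining that hyperedge-vertex to one fixed endpoint; grouped by the chosen endpoint these contracted stars are vertex-disjoint and have radius $1$, so $G[W]$ is one of the depth-$1$ minors occurring in the definition of $\nabla_1(I(H))$, whence its density is at most $\nabla_1(I(H))\le r$. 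Density at most $r$ on all induced subgraphs means average degree at most $2r$, hence a vertex of degree at most $2r$ in each, i.e.\ $G$ is $2r$-degenerate. I expect this to be the main obstacle: one cannot simply invoke $\nabla_1(G)\le\nabla_1(I(H))$, since a further star contraction inside $G$ corresponds to a radius-$2$ ball in $I(H)$ and hence to $\nabla_2$ rather than $\nabla_1$; the bound must instead be read off from the densities of the individual subgraphs, each of which is a genuine radius-$1$ minor of $I(H)$.

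Finally, Wood's estimate \cite{Wood2007} bounds the number of cliques of a $2r$-degenerate $n$-vertex graph with $n\ge 2r$ by $2^{2r}(n-2r+1)\le 4^r n$; for $1\le n<2r$ the graph has at most $2^{n}\le 4^{r}\le 4^{r}n$ cliques. Combining this with the injection of hyperedges into cliques gives $|E(H)|\le 4^r n$, as desired.
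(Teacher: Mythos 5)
Your proof is correct, and its overall architecture is the same as the paper's: pass to the downward closure, form the graph $G$ on $V(H)$ from the arity-$2$ hyperedges as in Proposition~\ref{prop:p2p3}, inject hyperedges into cliques of $G$, establish that $G$ is $2r$-degenerate, and invoke Wood's bound~\cite{Wood2007}. Where you diverge is the degeneracy step, and there your treatment is in fact \emph{sounder} than the paper's. The paper argues: ``$G$ is a star-minor of $I(H)$; this implies $\nabla_1(G)\le r$, and thus $G$ is $2r$-degenerate.'' Read literally, the first implication is not a valid inference: $\nabla_1$ is not monotone under taking star minors, for exactly the reason you flag --- a star contraction performed inside $G$ corresponds to contracting a radius-$2$ (not radius-$1$) subgraph of $I(H)$, so composing the two contractions lands at depth $2$ or more. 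The failure is not hypothetical even in this specific setting: take $H$ to be the downward closure of the edge set of the $1$-subdivision of $K_n$. Then $G$ is that $1$-subdivision, and $\nabla_1(G)\ge (n-1)/2$ (contracting the stars at the branch vertices recovers $K_n$), while $I(H)$ is essentially the $3$-subdivision of $K_n$, whose $\nabla_1$ is bounded by an absolute constant; so $\nabla_1(G)\le\nabla_1(I(H))$ is simply false, even though the proposition itself of course remains true. What the argument actually needs --- and all it needs --- is precisely the statement you prove: every (induced) subgraph of $G$ is \emph{itself} a radius-$1$ shallow minor of $I(H)$, obtained by deleting the irrelevant vertices of $I(H)$ and contracting the surviving hyperedge-vertices into chosen endpoints, hence has density at most $r$, hence $G$ is $2r$-degenerate. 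So your writeup does not merely reproduce the paper's proof; it repairs its one overstated step, and it also covers the case $n<2r$ of Wood's bound, which the paper passes over silently.
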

\begin{proof} We consider the graph $G$ as in the proof of Proposition~\ref{prop:p2p3} and 
recall that $G$ is a star-minor of $I(H)$. This implies that $\nabla_{1}(G)\leq r$ and thus 
$G$ is $2r$-degenerate. We conclude that $G$ contains at most $4^r\cdot (n-2r+1)\leq 4^{r}\cdot n$ cliques (from~\cite{Wood2007}).
The result follows, as for each hyperedge of $H$, there is a clique in $G$ on the same vertex set.
\end{proof}
It is now easy to produce an analogue of Proposition~\ref{prop:p4p5} by  observing that {\bf i}) $G$ cannot have $K_{2r+1,2r+1}$ as a subgraph (this graph has density more than $r$) and {\bf ii}) if $G'$ is a bipartite subgraph of $G$, then $\nabla_{1}(G')\leq r$. We conclude the following.

\begin{THM}\label{thm:topgrad}
  If $G$ is a graph with at least one edge where $\nabla_{1}(G)\leq r$,
  then
  \begin{align*}
    \cwd(G)&< 2\cdot 4^{r} \rwd(G),\\
    \twd(G)+1&< 12\cdot  r \cdot  4^{r} \rwd(G).
 \end{align*}
\end{THM}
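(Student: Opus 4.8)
The plan is to instantiate the general five-problem pipeline of Section~\ref{sec:minor} for the class of graphs with $\nabla_{1}\le r$, letting Proposition~\ref{newp2p3} play the role that Proposition~\ref{prop:p2p3} played for $K_r$-minor-free graphs, namely the answer to P3. Since the combinatorial core (bounding the number of hyperedges of a hypergraph whose incidence graph has $\nabla_1\le r$ by $4^r n$) is already in hand, what remains is to push this bound through the P3 $\to$ P4 $\to$ P5 chain by mimicking Propositions~\ref{prop:p3p4} and~\ref{prop:p4p5}.

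First I would record the P4 analogue: a binary matrix of rank $n$ whose bipartite adjacency graph $G'$ satisfies $\nabla_1(G')\le r$ has at most $4^r n$ distinct rows. Copying the proof of Proposition~\ref{prop:p3p4}, I delete linearly dependent columns so that $G'$ has exactly $n$ vertices on the column side, build the hypergraph $H$ whose incidence graph is $G'$ and whose vertices are the columns of the matrix, and apply Proposition~\ref{newp2p3} to bound the number of hyperedges of $H$, hence the number of distinct rows, by $4^r n$.

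Next, for the clique-width bound I would follow the P5 argument of Proposition~\ref{prop:p4p5}. For $X\subseteq V(G)$ with $\abs{X}\le\rwd(G)$, let $M$ be the bipartite adjacency matrix of $G\langle X\rangle$; then $\rank(M)\le\rwd(G)$ and the bipartite graph realizing $M$ is a bipartite subgraph of $G$. Here observation (ii) is essential: every bipartite subgraph $G'$ of $G$ satisfies $\nabla_1(G')\le\nabla_1(G)\le r$, because any family of vertex-disjoint stars in $G'$ is also such a family in $G$, so every star-minor of $G'$ is a star-minor of $G$. The P4 analogue then yields $\lambda_G(\rwd(G))\le 4^r\rwd(G)$, and Lemma~\ref{lem:cwdrwd2} gives $\cwd(G)\le 2\cdot 4^r\rwd(G)-1<2\cdot 4^r\rwd(G)$. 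For the tree-width bound I would use observation (i): $K_{2r+1,2r+1}$ has density $(2r+1)/2>r$, so $G$ cannot contain it as a subgraph (else $\nabla_1(G)>r$); then Gurski and Wanke's inequality~\eqref{eq:gw} with $t=2r+1$ gives $\twd(G)+1\le 3(2r)\cwd(G)=6r\cwd(G)<12r\cdot 4^r\rwd(G)$.

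Each individual step is routine once the two observations are verified, so the main obstacle is really just confirming observation (ii) --- the monotonicity of $\nabla_1$ under passing to a (bipartite) subgraph --- since this is precisely what licenses applying the hyperedge bound to the matrix-induced bipartite graph inside the P5 argument. The rest is bookkeeping of constants, and both displayed inequalities close because $\rwd(G)\ge 1$ for a graph with at least one edge.
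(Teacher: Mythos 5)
Your proposal is correct and follows precisely the paper's intended argument: the paper proves Theorem~\ref{thm:topgrad} by exactly this route, namely taking Proposition~\ref{newp2p3} as the answer to P3 and pushing it through the P3~$\to$~P4~$\to$~P5 chain of Propositions~\ref{prop:p3p4} and~\ref{prop:p4p5}, using the same two observations --- that $K_{2r+1,2r+1}$ has density exceeding $r$ and hence cannot be a subgraph, and that $\nabla_1$ does not increase on bipartite subgraphs --- followed by Lemma~\ref{lem:cwdrwd2} and the Gurski--Wanke inequality~\eqref{eq:gw} with $t=2r+1$. Your write-up is in fact more detailed than the paper's, which merely states these two observations and asserts the conclusion.
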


Proposition~\ref{newp2p3} does not hold any more if
we replace $\nabla_1$ with $\nabla_0$: 
The complete graph $K_n$
as a hypergraph
has $\binom{n}{2}$ hyperedges
and yet $I(K_n)$ is $2$-degenerate.

\section{Bounds when excluding $K_{r,r}$ as a subgraph}

In this section, we investigate graphs with no $K_{r,r}$ subgraph,
  motivated by the inequality \eqref{eq:gw} of Gurski and Wanke, which
is
\[
\twd(G)\le 2(r-1) \cwd(G)-1.\]
One natural question we might ask is the relation between tree-width
and rank-width for graphs with no $K_{r,r}$ subgraph.
By our approach, 
it is enough to find  an upper bound on the number of hyperedges in a hypergraph
with no $K_{r,r}$ subgraph in its incidence graph.
What are those hypergraphs?
In fact, if $\mathcal F$ is a collection of hyperedges of such a
hypergraph,
then the intersection of $r$ hyperedges can have at most $r-1$ elements.
The problem of finding the maximum possible number of sets with
$k$-wise restricted intersection was studied
more generally by F\"uredi and Sudakov~\cite{FS2004}.
We cite their lemma here. 
\begin{LEM}[F\"uredi and Sudakov {\cite[Lemma 2.1]{FS2004}}]
  Let $k\ge 2$ and $s$ be two positive integers.
  If $\mathcal F$ is a family of subsets of an $n$-element set
  such that $|A_1\cap A_2\cap\cdots \cap A_k|<s$ for all
  $A_1,A_2,\ldots,A_k\in \mathcal F$,
  then
  \[
  |\mathcal F|
  \le \frac{k-2}{s+1}\binom{n}{s}+\sum_{i=0}^s \binom{n}{i}.
  \]
\end{LEM}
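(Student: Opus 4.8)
The plan is to prove this by a double-counting argument on $s$-element subsets of the ground set, after splitting $\mathcal F$ according to the cardinality of its members. Throughout I read the hypothesis as applying to $k$ \emph{distinct} sets of $\mathcal F$; otherwise, taking $A_1=\cdots=A_k$ would force every member of $\mathcal F$ to have size below $s$, and the stated bound could not be tight. First I would write $\mathcal F=\mathcal F_{<s}\cup\mathcal G$, where $\mathcal F_{<s}$ collects the sets of size at most $s-1$ and $\mathcal G$ collects the sets of size at least $s$. Trivially $\abs{\mathcal F_{<s}}\le\sum_{i=0}^{s-1}\binom ni$, so it suffices to bound $\abs{\mathcal G}$ by $\binom ns+\frac{k-2}{s+1}\binom ns$, which together with the small sets reproduces the claimed total.

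The engine is a local bound: for every fixed $s$-element subset $S$ of the ground set, at most $k-1$ members of $\mathcal F$ can contain $S$. Indeed, were there $k$ distinct sets all containing $S$, their $k$-wise intersection would contain $S$ and hence have size at least $s$, contradicting the hypothesis. Next I would double-count the incidences $(S,A)$ with $\abs S=s$, $S\subseteq A$, and $A\in\mathcal G$. Writing $m$ for the number of members of $\mathcal G$ of size exactly $s$ and $L$ for the number of size at least $s+1$, each size-$s$ set contributes $\binom ss=1$ incidence and each larger set contributes $\binom{\abs A}s\ge\binom{s+1}s=s+1$, so the number of incidences is at least $m+(s+1)L$. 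Summing the local bound over the $\binom ns$ choices of $S$ gives at most $(k-1)\binom ns$ incidences. Hence $m+(s+1)L\le(k-1)\binom ns$.

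It remains to extract a bound on $\abs{\mathcal G}=m+L$, and this is the only genuinely delicate point: the naive route, which simply caps each $s$-subset's degree by $k-1$ and ignores that $A$ may equal $S$, only yields the weaker constant $k-1$ in place of $k-2$. The fix is to keep the size-$s$ sets separate. Solving the inequality for $L$ gives $L\le\frac{(k-1)\binom ns-m}{s+1}$, so $m+L\le m\cdot\frac{s}{s+1}+\frac{k-1}{s+1}\binom ns$. Since the coefficient of $m$ is positive and $m\le\binom ns$ (there are only $\binom ns$ sets of size $s$), the bound is maximized at $m=\binom ns$, giving $\abs{\mathcal G}\le\frac{s+k-1}{s+1}\binom ns=\binom ns+\frac{k-2}{s+1}\binom ns$. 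Adding $\abs{\mathcal F_{<s}}\le\sum_{i=0}^{s-1}\binom ni$ produces exactly $\frac{k-2}{s+1}\binom ns+\sum_{i=0}^{s}\binom ni$, as claimed. The main obstacle, as noted, is squeezing out the improvement from $k-1$ to $k-2$; everything else is routine once the incidence count and the trivial estimate $m\le\binom ns$ are in place.
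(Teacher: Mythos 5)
Your proof is correct. One caveat on the comparison: the paper does not prove this lemma at all --- it quotes it from F\"uredi and Sudakov \cite{FS2004} (``We cite their lemma here''), so the only proof to measure yours against is the one in that reference, and your argument essentially reconstructs it. The two ingredients that matter are exactly yours: every $s$-element subset $S$ of the ground set lies in at most $k-1$ distinct members of $\mathcal F$, while a member of size at least $s+1$ contains at least $s+1$ such subsets; feeding both into a single incidence count yields the joint constraint $m+(s+1)L\le (k-1)\binom{n}{s}$, and your point that this must be exploited jointly --- maximizing $m+L$ subject to that constraint together with $m\le\binom{n}{s}$, rather than bounding $m$ and $L$ separately --- is precisely what improves the constant from $\frac{k-1}{s+1}$ to $\frac{k-2}{s+1}$. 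Your reading of the hypothesis as ranging over $k$ pairwise distinct members is also the intended one; under the literal reading allowing repetitions, every member would have size less than $s$ and the conclusion would hold trivially, so no generality is lost. The edge cases check out as well: for $k=2$ your inequality gives $\lvert\mathcal G\rvert\le\binom{n}{s}$, consistent with the vanishing of the first term in the bound.
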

(In \cite{FS2004}, $k$ is assumed to be larger than $2$. However, when $k=2$, then the above lemma is implied by a theorem of Frankl and Wilson \cite[Theorem 11]{FW1981}.)
In our case, we let $k=s=r$. Then the above inequality answers P3;
It provides
an upper bound on the
number of hyperedges in a hypergraph whose incidence graph has
no subgraph isomorphic to $K_{r,r}$.
\begin{PROP}[P3]\label{prop:krr}
  Let $H$ be an $n$-vertex hypergraph.
  Let $r\ge 2$.
  If the incidence graph of $H$ has no $K_{r,r}$ subgraph,
  then 
 \[
 \vert E(H) \rvert \le \frac{r-2}{r+1}\binom{n}{r}+\sum_{i=0}^r \binom{n}{i}.
  \]
\end{PROP}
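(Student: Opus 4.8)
The plan is to read $E(H)$ as a set family $\mathcal{F}$ on the ground set $V(H)$ of size $n$ and then apply the quoted F\"uredi--Sudakov lemma with the parameters $k=s=r$. The only real work is to convert the combinatorial hypothesis ``$I(H)$ has no $K_{r,r}$ subgraph'' into the set-theoretic hypothesis ``every $r$ distinct members of $\mathcal{F}$ have common intersection of size at most $r-1$,'' after which the desired bound is an immediate citation.

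First I would recall that $I(H)$ is bipartite with the two sides $V(H)$ and $E(H)$, and that $K_{r,r}$ is connected; since a connected bipartite graph has a unique bipartition, any copy of $K_{r,r}$ inside $I(H)$ must place one of its two color classes entirely inside $V(H)$ and the other entirely inside $E(H)$. Hence a copy of $K_{r,r}$ amounts exactly to a choice of $r$ distinct hyperedges $e_1,\dots,e_r$ together with $r$ distinct vertices $v_1,\dots,v_r$ such that $v_i\in e_j$ for all $i,j$; equivalently $\{v_1,\dots,v_r\}\subseteq e_1\cap\cdots\cap e_r$. Consequently, $I(H)$ contains no $K_{r,r}$ subgraph if and only if $\abs{e_1\cap\cdots\cap e_r}\le r-1<r$ for every choice of $r$ distinct hyperedges of $H$.

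With this translation in hand, I would set $\mathcal{F}=E(H)$ and invoke the F\"uredi--Sudakov lemma with $k=s=r$, which immediately yields $\abs{E(H)}=\abs{\mathcal{F}}\le \frac{r-2}{r+1}\binom{n}{r}+\sum_{i=0}^r\binom{n}{i}$, as claimed. (If $\abs{E(H)}<r$ there are no $r$ distinct hyperedges to consider, so the intersection hypothesis holds vacuously and the bound is trivial; this degenerate case needs no separate treatment.)

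The step that needs the most care is the translation itself: one must verify that a forbidden $K_{r,r}$ genuinely corresponds to an $r$-wise intersection of size at least $r$, which rests on two points, namely that the bipartition of $I(H)$ forces the two sides of any $K_{r,r}$ to separate into vertices and hyperedges, and that $H$ has no parallel hyperedges, so distinct hyperedge-vertices of $I(H)$ are distinct members of $\mathcal{F}$, matching the distinctness implicit in the lemma's hypothesis. Everything beyond this is a direct appeal to the cited lemma, so I do not anticipate any serious computational obstacle.
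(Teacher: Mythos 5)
Your proposal is correct and follows essentially the same route as the paper: the paper's (implicit) proof is exactly the observation that a $K_{r,r}$ in $I(H)$ corresponds to $r$ distinct hyperedges whose intersection contains $r$ vertices, followed by an application of the F\"uredi--Sudakov lemma with $k=s=r$. Your more careful verification of the bipartition/distinctness details only makes explicit what the paper leaves to the reader.
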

\begin{THM}\label{thm:krr}
  Let $r\ge 2$.
  Let $G$ be an $n$-vertex graph with no subgraph isomorphic to $K_{r,r}$.
  Then
  \begin{align*}
    \cwd(G)&< \frac{2(r-2)}{r+1} \binom{\rwd(G)}{r}+
    2\sum_{i=0}^r \binom{\rwd(G)}{i} ,\\
    \twd(G)+1&<3(r-1)\left(\frac{2(r-2)}{r+1} \binom{\rwd(G)}{r}+
    2\sum_{i=0}^r \binom{\rwd(G)}{i} \right).
 \end{align*}
\end{THM}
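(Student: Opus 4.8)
The plan is to follow exactly the same pipeline established in Sections~\ref{sec:minor} and~\ref{sec:planar}, now using Proposition~\ref{prop:krr} as our answer to P3 for the $K_{r,r}$-subgraph-free case. The strategy is to chain P3 $\Rightarrow$ P4 $\Rightarrow$ P5, applying Lemma~\ref{lem:cwdrwd2} to convert a bound on $\lambda_G$ into a clique-width bound, and then Gurski and Wanke's inequality~\eqref{eq:gw} to convert that into a tree-width bound. The key observation making this work is that the two structural facts used throughout are stable under the operations involved: if $G$ has no $K_{r,r}$ subgraph, then neither does any bipartite subgraph of $G$, and in particular the bipartite graph associated to any induced cut is $K_{r,r}$-free.

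First I would establish the analogue of Proposition~\ref{prop:p3p4}: every binary matrix of rank $m$ whose associated bipartite graph has no $K_{r,r}$ subgraph has at most $\frac{r-2}{r+1}\binom{m}{r}+\sum_{i=0}^r\binom{m}{i}$ distinct rows. The argument is identical to Proposition~\ref{prop:p3p4}: reduce to $m$ columns by deleting linearly dependent ones, discard identical rows, build the hypergraph $H$ whose incidence graph is the bipartite graph on $m$ vertices, and invoke Proposition~\ref{prop:krr} with $n=m$. Next I would mimic Proposition~\ref{prop:p4p5}. Letting $m=\rwd(G)$ and taking any $X\subseteq V(G)$ with $\abs{X}\le m$, the bipartite adjacency matrix of the cut has rank at most $m$ and its associated bipartite graph is $K_{r,r}$-free, so by the matrix bound it has at most $\frac{r-2}{r+1}\binom{m}{r}+\sum_{i=0}^r\binom{m}{i}$ distinct rows. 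Hence
\[
\lambda_G(m)\le \frac{r-2}{r+1}\binom{m}{r}+\sum_{i=0}^r\binom{m}{i}.
\]
Feeding this into Lemma~\ref{lem:cwdrwd2} yields $\cwd(G)\le 2\lambda_G(m)-1$, which gives the stated clique-width bound. For the tree-width bound I would apply~\eqref{eq:gw} directly: since $G$ has no $K_{r,r}$ subgraph, $\twd(G)+1\le 3(r-1)\cwd(G)$, and substituting the clique-width bound produces the desired inequality.

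I do not anticipate a genuine obstacle here, since every ingredient is already in place; the work is purely in bookkeeping the binomial-coefficient expression correctly through each reduction and verifying the strict-versus-nonstrict inequalities so that the final bound comes out with the claimed strict inequality. The one point requiring a little care is the transition from the nonstrict bound $\cwd(G)\le 2\lambda_G(m)-1$ to the strict bound in the theorem statement; this should follow from the fact that $G$ has at least one edge (so $m=\rwd(G)\ge 1$), making the relaxation of $-1$ to a strict inequality harmless, exactly as in the proof of Theorem~\ref{thm:surface}.
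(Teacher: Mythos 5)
Your proof is correct and takes essentially the same route as the paper: the paper's own proof of Theorem~\ref{thm:krr} is a one-liner that invokes Proposition~\ref{prop:krr} to bound $\lambda_G$, leaving implicit precisely the P3 $\Rightarrow$ P4 $\Rightarrow$ P5 chain, Lemma~\ref{lem:cwdrwd2}, and inequality~\eqref{eq:gw} that you spell out. Your handling of the strict inequality via the $-1$ in $\cwd(G)\le 2\lambda_G(\rwd(G))-1$ is also exactly what the paper intends.
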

\begin{proof}
  From Proposition~\ref{prop:krr},
  $\lambda_G(n)\le \frac{r-2}{r+1} \binom{n}{r}+
  \sum_{i=0}^r \binom{n}{i}$.
\end{proof}

\section{Conclusions}

We observe that Theorem~\ref{thm:minor} has important algorithmic
consequences for approximating rank-width.
By~Feige et al.~\cite{FHL2008}, for every fixed $r$ there exists a polynomial time
constant factor approximating algorithm
computing the tree-width of a graph excluding $K_{r}$ as a minor.
By combining this result with Theorem~\ref{thm:minor}, we deduce that
for every fixed $r$, there is a polynomial time algorithm approximating
within constant factor the  rank-width of a $K_{r}$-minor free graph.

As a side remark, we proved in Proposition~\ref{prop:p2minor}
that every $n$-vertex graph with no $K_r$-minor
has at most $2^{\mu r\log\log r}n$ cliques for a fixed $\mu$.
The previous best upper bound $2^{O(r\sqrt{\log r})}$ was observed by Wood \cite{Wood2007}.
He posed an open problem whether such a graph has at most $c^n$ cliques for a constant $c$.
It will be interesting to resolve this open problem.


\begin{thebibliography}{10}

\bibitem{BT1998}
B.~Bollob{\'a}s and A.~Thomason.
\newblock Proof of a conjecture of {M}ader, {E}rd{\H o}s and {H}ajnal on
  topological complete subgraphs.
\newblock {\em European J. Combin.}, 19(8):883--887, 1998.

\bibitem{CR2005}
D.~G. Corneil and U.~Rotics.
\newblock On the relationship between clique-width and treewidth.
\newblock {\em SIAM J. Comput.}, 34(4):825--847 (electronic), 2005.

\bibitem{CO2000}
B.~Courcelle and S.~Olariu.
\newblock Upper bounds to the clique width of graphs.
\newblock {\em Discrete Appl. Math.}, 101(1-3):77--114, 2000.

\bibitem{Diestel2005}
R.~Diestel.
\newblock {\em Graph theory}, volume 173 of {\em Graduate Texts in
  Mathematics}.
\newblock Springer-Verlag, Berlin, third edition, 2005.

\bibitem{DFJW2009}
V.~Dujmovi{\'c}, G.~Fijav{\v z}, G.~Joret, and D.~R. Wood.
\newblock The maximum number of cliques in a graph embedded in a surface.
\newblock Manuscript, arxiv:0906.4142v1, 2009.

\bibitem{FHL2008}
U.~Feige, M.~Hajiaghayi, and J.~R. Lee.
\newblock Improved approximation algorithms for minimum weight vertex
  separators.
\newblock {\em SIAM J. Comput.}, 38(2):629--657, 2008.

\bibitem{FW1981}
P.~Frankl and R.~M. Wilson.
\newblock Intersection theorems with geometric consequences.
\newblock {\em Combinatorica}, 1(4):357--368, 1981.

\bibitem{FS2004}
Z.~F{\"u}redi and B.~Sudakov.
\newblock Extremal set systems with restricted {$k$}-wise intersections.
\newblock {\em J. Combin. Theory Ser. A}, 105(1):143--159, 2004.

\bibitem{GW2000}
F.~Gurski and E.~Wanke.
\newblock The tree-width of clique-width bounded graphs without {$K_{n,n}$}.
\newblock In {\em Graph-theoretic concepts in computer science (Konstanz,
  2000)}, volume 1928 of {\em Lecture Notes in Comput. Sci.}, pages 196--205.
  Springer, Berlin, 2000.

\bibitem{KS1996b}
J.~Koml{\'o}s and E.~Szemer{\'e}di.
\newblock Topological cliques in graphs. {II}.
\newblock {\em Combin. Probab. Comput.}, 5(1):79--90, 1996.

\bibitem{Kostochka1982}
A.~V. Kostochka.
\newblock The minimum {H}adwiger number for graphs with a given mean degree of
  vertices.
\newblock {\em Metody Diskret. Analiz.}, 38:37--58, 1982.

\bibitem{Kostochka1984}
A.~V. Kostochka.
\newblock Lower bound of the {H}adwiger number of graphs by their average
  degree.
\newblock {\em Combinatorica}, 4(4):307--316, 1984.

\bibitem{MT2001}
B.~Mohar and C.~Thomassen.
\newblock {\em Graphs on surfaces}.
\newblock Johns Hopkins Studies in the Mathematical Sciences. Johns Hopkins
  University Press, Baltimore, MD, 2001.

\bibitem{NO2008a}
J.~Ne{\v{s}}et{\v{r}}il and P.~Ossona~de Mendez.
\newblock Grad and classes with bounded expansion. {I}. {D}ecompositions.
\newblock {\em European J. Combin.}, 29(3):760--776, 2008.

\bibitem{NO2008b}
J.~Ne{\v{s}}et{\v{r}}il and P.~Ossona~de Mendez.
\newblock Grad and classes with bounded expansion. {II}. {A}lgorithmic aspects.
\newblock {\em European J. Combin.}, 29(3):777--791, 2008.

\bibitem{NO2008c}
J.~Ne{\v{s}}et{\v{r}}il and P.~Ossona~de Mendez.
\newblock Grad and classes with bounded expansion. {III}. {R}estricted graph
  homomorphism dualities.
\newblock {\em European J. Combin.}, 29(4):1012--1024, 2008.

\bibitem{NO2008}
J.~Ne{\v{s}}et{\v{r}}il and P.~Ossona De~Mendez.
\newblock Structural properties of sparse graphs.
\newblock In {\em Building bridges}, volume~19 of {\em Bolyai Soc. Math.
  Stud.}, pages 369--426. Springer, Berlin, 2008.

\bibitem{Oum2006c}
S.~Oum.
\newblock Rank-width is less than or equal to branch-width.
\newblock {\em J. Graph Theory}, 57(3):239--244, 2008.

\bibitem{OS2004}
S.~Oum and P.~Seymour.
\newblock Approximating clique-width and branch-width.
\newblock {\em J. Combin. Theory Ser. B}, 96(4):514--528, 2006.

\bibitem{Ringel1965}
G.~Ringel.
\newblock Das {G}eschlecht des vollst\"andigen paaren {G}raphen.
\newblock {\em Abh. Math. Sem. Univ. Hamburg}, 28:139--150, 1965.

\bibitem{Ringel1965a}
G.~Ringel.
\newblock Der vollst\"andige paare {G}raph auf nichtorientierbaren {F}l\"achen.
\newblock {\em J. Reine Angew. Math.}, 220:88--93, 1965.

\bibitem{RS1994}
N.~Robertson and P.~Seymour.
\newblock Graph minors. {XI}. {C}ircuits on a surface.
\newblock {\em J. Combin. Theory Ser. B}, 60(1):72--106, 1994.

\bibitem{TW2005}
R.~Thomas and P.~Wollan.
\newblock An improved linear edge bound for graph linkages.
\newblock {\em European J. Combin.}, 26(3-4):309--324, 2005.

\bibitem{Thomason1984}
A.~Thomason.
\newblock An extremal function for contractions of graphs.
\newblock {\em Math. Proc. Cambridge Philos. Soc.}, 95(2):261--265, 1984.

\bibitem{Thomason2001}
A.~Thomason.
\newblock The extremal function for complete minors.
\newblock {\em J. Combin. Theory Ser. B}, 81(2):318--338, 2001.

\bibitem{Wood2007}
D.~R. Wood.
\newblock On the maximum number of cliques in a graph.
\newblock {\em Graphs Combin.}, 23(3):337--352, 2007.

\bibitem{Zykov1974}
A.~A. Zykov.
\newblock Hypergraphs.
\newblock {\em Uspehi Mat. Nauk}, 29(6 (180)):89--154, 1974.

\end{thebibliography}
\end{document}